\newtheorem{prop}{Proposition}[section]
\newtheorem{lema}{Lemma}[section]
\newtheorem{teo}{Theorem}[section]
\newtheorem{coro}{Corollary}[section]
\theoremstyle{definition}
\newtheorem{defi}{Definition}[section]
\newtheorem{rem}{Remark}[section]
\def\R{{\mathbb R}}
\def\N{{\mathbb N}}
\def\F{{\mathcal F}}
\def\M{{\mathcal M}}
\def\A{{\mathcal A}}
\title[Zero temperature limits of Gibbs  states]{
Zero temperature limits of Gibbs  states for almost-additive potentials}
\date{\today}
\subjclass[2000]{37D35, 37D25}
\keywords{Thermodynamic formalism, ergodic optimisation, Gibbs measures, almost-additive sequences}
\author{Godofredo Iommi} \address{Facultad de Matem\'aticas,
Pontificia Universidad Cat\'olica de Chile (PUC), Avenida Vicu\~na Mackenna 4860, Santiago, Chile}
\email{giommi@mat.puc.cl}
\urladdr{http://www.mat.puc.cl/\textasciitilde giommi/}
\author{Yuki Yayama}
\address{Departamento de Ciencias B\'{a}sicas, Universidad del B\'{i}o-B\'{i}o, Avenida Andr\'{e}s Bello, s/n
Casilla 447, Chill\'{a}n, Chile}
\email{yyayama@ubiobio.cl}
\begin{document}

\begin{abstract}
This paper is devoted to study ergodic optimisation problems for almost-additive sequences of functions (rather than a fixed potential) defined over countable Markov shifts (that is a non-compact space). Under certain assumptions we prove that any accumulation point of a family of Gibbs equilibrium states is a maximising measure. Applications are given in the study of the joint spectral radius and to multifractal analysis of Lyapunov exponent of non-conformal maps.\end{abstract}

\maketitle

\section{Introduction}
In statistical mechanics a very important problem is that of describing how do
Gibbs states varies  as the temperature changes. Of particular importance is the case when the
temperature decreases to zero.  Indeed, this case is related to ground states, that is, measures supported on configurations of minimal energy \cite[Appendix B.2]{efs}. It turns out that materials at low temperature tend to be  highly ordered, they might even reach
crystal or quasi crystal configurations.  Grounds states are the measures that account for this phenomena (see \cite[Chapter 3]{bll} for details).  A similar problem, in the context of dynamical systems, has been the subject of great interest over the last years. Indeed, given a dynamical system $(\Sigma, \sigma)$ and an observable $\phi:\Sigma \toÊ\R$ we say that a $\sigma$-invariant measure $\mu$  is a \emph{maximising} measure for $\phi$ if
\begin{equation*}
\int \phi \ d \mu = \sup \left\{ \int \phi \ d \nu : \nu \in \M \right\},
\end{equation*}
where $\M$ denotes the set of $\sigma-$invariant probability measures. In certain cases, some maximising measures can be described as the limit of  Gibbs states as the temperature goes to zero. Indeed, assume that $(\Sigma, \sigma)$ is a transitive sub-shift of finite type defined over a finite alphabet and that $\phi$ is a H\"older potential. It is well known that for every $t\in \R$ there exists a unique Gibbs state $\mu_t$ (which is also an equilibrium measure) for the potential $t \phi$ (see \cite[Theorem 1.2 and 1.22]{bow}). It turns out that if $\mu$ is any
weak-star accumulation point of $\{\mu_t\}_{t> 0}$ then $\mu$ is a maximsing measure (see, for example, \cite[Section 4]{j1}). Note that the value $t$ can be thought of as the inverse of the temperature, hence as the temperature decreases to zero the value of $t$ tends to infinity. The theory that studies maximising measures is usually called \emph{ergodic optimisation}. See \cite{bo,j1,j2} for more details.

The purpose of the present paper is to study a similar problem in the context of countable Markov shifts and for sequences of potentials. To be more precise, let $(\Sigma, \sigma)$ be a countable Markov shift  and let
$\F=\{\log f_n\}_{n=1}^{\infty}$ an almost-additive sequence of continuous functions $\log f_n : \Sigma \to \R$ (see Section \ref{sec:backg} for precise definitions). We say that a $\sigma$-invariant measure $\mu$  is a \emph{maximising} measure for $\F$ if
\begin{equation*}
 \alpha(\F):=  \sup \left\{\lim_{n\rightarrow \infty}\frac{1}{n}\int \log f_n \ d\nu : \nu \in \M \right\}= \lim_{n\rightarrow \infty}\frac{1}{n}\int \log f_n \ d\mu.
\end{equation*}
In our main result, Theorem \ref{main4}, we prove that under certain assumptions there exists a sequence of Gibbs states
$\{\mu_t\}_{t\geq 1}$  corresponding to $t \F$ and that this sequences has an accumulation point $\mu$. Moreover,  the measure $\mu$ is maximising for $\F$.  We stress that since the space $\Sigma$ is not compact the space $\mathcal{M}$ is not compact either. Therefore, the existence of an accumulation point is far from trivial. A similar problem, in the case of a single function instead of a sequence $\F$, was first studied by Jenkinson, Mauldin and Urba\'nski \cite{jmu} (see also \cite{big,bf,i,ke}).

Note that even though we prove the existence of an accumulation point for the sequence $\{\mu_t\}_{t\geq 1}$ , this does not imply that the limit $\lim_{t \to \infty} \mu_t$ exists. Actually, in the simpler setting of compact sub-shifts of finite type and H\"older potentials,  Hochman and Chazottes \cite{hc} constructed an example where there is no convergence. However,  under certain finite range assumptions convergence has been proved in \cite{br,l, chgu}.

The thermodynamic formalism needed in the context of almost-additive sequences for countable Markov shifts was developed in \cite{iy}. In particular, the existence of Gibbs states was established in \cite[Theorem 4.1]{iy}.  New results in this direction are obtained in Section \ref{sec:gibbs}, where we establish conditions that ensure that certain Gibbs states are actually equilibrium measures (recall that in this non-compact setting this is not always the case, see \cite[p.1757]{s3}).

We stress that our formalism allows us to deal with products of matrices and it is well suited for applications.
In particular, in Section \ref{sec:joint}, applications of our results are given to the study of the joint spectral
radius of a set of matrices. We construct an invariant measure that realises the joint spectral radius. Moreover, we construct a sequence of Gibbs states that can be used to approximate the value of the joint spectral radius. It should be pointed out that these results are new even in the case of finitely many matrices. In the same spirit,
Section \ref{sec:singular} is devoted to another application of our results in the setting of multifractal analysis.
For non-conformal dynamical systems defined on the plane, we  obtain the upper bound for the Lyapunov spectrum.
Moreover, we construct a measure supported on the maximal level set.

\section{Preliminaries} \label{sec:backg}
In this section, we give a brief overview of recent results of thermodynamic formalism for
almost-additive sequences on countable Markov shifts. We collect results mostly from \cite{iy}.

Let $(\Sigma, \sigma)$ be a one-sided Markov shift
over a countable alphabet $S$. This means that there exists a matrix
$(t_{ij})_{S \times S}$ of zeros and ones (with no row and no column
made entirely of zeros) such that
\begin{equation*}
\Sigma=\left\{ x\in S^{\N} : t_{x_{i} x_{i+1}}=1 \ \text{for every $i
\in \N$}\right\}.
\end{equation*}
The \emph{shift map} $\sigma:\Sigma \to \Sigma$ is defined by $\sigma(x_1 x_2 \dots)=(x_2 x_3 \dots)$.  Sometimes we simply say that $(\Sigma, \sigma)$
is a \emph{countable Markov shift}.  The set
\begin{equation*}
 C_{i_1 \cdots i_{n}}= \left\{x \in \Sigma : x_j=i_j \text{ for } 1 \le j \le n \right\}
 \end{equation*}
 is called \emph{cylinder} of length $n$. The space $\Sigma$ endowed with the topology generated by cylinder sets is a non-compact space.  We denote by $\M$ the set of $\sigma$-invariant Borel probability measures on $\Sigma$. We will always assume $(\Sigma, \sigma)$ to be topologically mixing, that is, for every $a,b \in S$ there exists $N_{ab} \in \N$ such that
for every $n > N_{ab}$ we have $C_a \cap \sigma^{-n} C_b \neq \emptyset$.

\begin{defi} \label{aaa}
Let $(\Sigma,\sigma)$ be a one-sided countable state Markov shift. For each $n\in \N$, let  $f_{n}: \Sigma \to \R^{+}$ be a continuous function.
A sequence $\mathcal{F}= \{ \log f_n \}_{n=1}^{\infty}$ on $\Sigma$ is called \emph{almost-additive} if there exists a constant $C\geq0$ such that for every $n,m\in \N, x\in \Sigma$, we have
\begin{equation} \label{A1}
 f_n(x) f_{m}(\sigma^n x) e^{-C} \leq f_{n+m}(x),
 \end{equation}
and
\begin{equation} \label{A2}
 f_{n+m}(x) \leq  f_n(x) f_{m}(\sigma^n x) e^{C}.
    \end{equation}
   \end{defi}

Throughout  this paper, we will assume the sequence $\F$ to be almost-additive.
We also assume the following regularity  condition.

\begin{defi}\label{bowen}
Let $(\Sigma, \sigma)$ be a one-sided countable Markov shift. For each $n\in \N$, let $f_{n}: \Sigma \rightarrow \R^{+}$ be continuous. A sequence
$\mathcal{F}= \{ \log f_n \}_{n=1}^{\infty}$ on $\Sigma$ is called a \emph{Bowen}
sequence if there exists $M \in \R^{+}$ such that
\begin{equation}\label{bowenbound}
 \sup \{ A_n : n \in \NÊ\} \leq M,
\end{equation}
where
\[A_n= \sup \left\{ \frac{f_n(x)}{f_n(y)} : x,y  \in \Sigma, x_i=y_i \textrm{ for } 1 \leq i \leq n\right\}.\]
\end{defi}

In \cite{iy} thermodynamic formalism was developed for almost-additive Bowen sequences. The following definition of pressure is a generalisation of the one given by Sarig \cite{s1} to the case of  almost-additive sequences.

\begin{defi} \label{def:pre} Let $\F=\{\log f_n\}_{n \in \N}$ be an almost-additive Bowen sequence, the
\emph{Gurevich pressure} of $\F$, denoted by $P(\F)$, is defined by
$$P(\F)=\lim_{n\rightarrow \infty} \frac{1}{n} \log \left(\sum_{\sigma^{n}x=x}f_n(x)\chi_{C_a}(x)\right),$$
where $\chi_{C_a}(x)$ is the characteristic function of the cylinder $C_a$.
\end{defi}
Let us stress that the limit always exists and does not depend on the choice of the cylinder.  Note that  if $f:\Sigma \rightarrow \R$ is a continuous function, the sequence of Birkhoff sums of $f$
forms an almost additive sequence (in this case the constant $C$ in definition \ref{aaa} is equal to zero). For every $n \in \N, x\in \Sigma$, define $f_n:\Sigma\rightarrow \R^{+}$
by $f_n(x)=e^{f(x)+f(\sigma x) +\dots+f (\sigma^{n-1}x)}$. Then the sequence $\{\log f_n\}_{n=1}^{\infty}$ is additive.  This remark is the link that ties up the thermodynamic formalism for a continuous function with that of sequences of continuous functions. Therefore,  the definition of pressure given in definition \ref{def:pre} generalises that of Gurevich pressure given by Sarig \cite{s1}. Also, we note that
$\lim_{n \rightarrow \infty}\frac{1}{n} \int \log f_n \  d\mu= \int f \  d\mu$ for any $\mu\in \M$.
Therefore, the next theorem is a
generalisation of the variational principle for continuous functions to the setting of almost-additive sequence of continuous functions. It was proved in \cite[Theorem 3.1]{iy}. In order to state it we need the following definition, given $f: \Sigma \to \R$ a continuous function, the \emph{transfer operator} $L_{f}$   applied to  function $g: \Sigma \rightarrow \R$
is formally defined  by
\begin{equation*} \label{transfer}
\left( L_{f} g \right) (x) := \sum_{\sigma z=x} f(z) g(z) \quad \text{ for every } x\in \Sigma.
\end{equation*}

\begin{teo}\label{main1}
Let $(\Sigma, \sigma)$ be a topologically mixing countable state Markov shift and $\F=\{ \log f_n \}_{n=1}^{\infty}$ be an almost-additive
Bowen sequence on $\Sigma$ with  $\vert \vert L_{f_1}1 \vert \vert _{\infty}<\infty$.
Then $-\infty<P(\F)<\infty$ and
\begin{align*}
P(\F)&=\sup \left\{ h(\mu) + \lim_{n\to \infty} \frac{1}{n} \int \log f_n \  d\mu : \mu \in \mathcal{M} \textrm{ and }
\lim_{n \rightarrow \infty}\frac{1}{n} \int \log f_n \  d\mu \neq -\infty
\right\}\\
&=\sup \left\{ h(\mu) + \int \lim_{n\to \infty}\frac{1}{n}\log f_n \  d\mu : \mu \in \mathcal{M} \textrm{ and }
\int \lim_{n \rightarrow \infty}\frac{1}{n}\log f_n \  d\mu \neq -\infty
\right\}.
\end{align*}
\end{teo}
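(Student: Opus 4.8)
The plan is to establish the variational principle in two halves: the easy inequality $P(\F) \geq \sup\{\cdots\}$ for each measure $\mu$, and the harder reverse inequality using the periodic-orbit definition of $P(\F)$. Throughout one uses the subadditive ergodic theorem to guarantee that $\lim_{n\to\infty}\frac{1}{n}\log f_n$ exists $\mu$-a.e. and that $\lim_{n\to\infty}\frac1n\int\log f_n\,d\mu = \int \lim_{n\to\infty}\frac1n\log f_n\,d\mu$ whenever the integrals are not $-\infty$; this also explains why the two displayed suprema coincide, so it suffices to work with either formulation. The finiteness $P(\F)<\infty$ should follow by comparing $\sum_{\sigma^n x = x} f_n(x)\chi_{C_a}(x)$ with $\|L_{f_1}1\|_\infty$: almost-additivity \eqref{A2} gives $f_n(x) \leq e^{(n-1)C} f_1(x) f_1(\sigma x)\cdots f_1(\sigma^{n-1}x)$, and summing over the relevant periodic points, together with the Bowen property controlling the variation of $f_n$ on cylinders, bounds the partition function by $(e^C \|L_{f_1}1\|_\infty)^n$ up to a constant, so $P(\F) \leq C + \log\|L_{f_1}1\|_\infty < \infty$. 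The lower bound $P(\F) > -\infty$ follows from topological mixing, which guarantees enough periodic orbits so that the partition sums are bounded below, combined with a lower bound on $f_n$ along a fixed cylinder coming from \eqref{A1}.

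For the inequality $P(\F) \geq h(\mu) + \lim_n \frac1n\int\log f_n\,d\mu$, I would take an arbitrary $\mu \in \M$ with $\lim_n\frac1n\int\log f_n\,d\mu \neq -\infty$ and, since the pressure does not depend on the cylinder $C_a$, reduce to a computation over a cylinder carrying positive $\mu$-mass. The argument is the classical one: partition the phase space by cylinders of length $n$, use that $h(\mu)$ is the limit of $\frac1n H_\mu(\bigvee_{i=0}^{n-1}\sigma^{-i}\alpha)$ over a generating partition, and estimate $\sum_{\sigma^n x = x} f_n(x)\chi_{C_a}(x)$ from below by picking one periodic point in each allowed $n$-cylinder and invoking the Bowen property to replace the value of $f_n$ at the periodic point by its essentially constant value on the cylinder; then apply the concavity/Jensen argument (as in Walters' proof of the variational principle) to get $\frac1n\log(\text{partition sum}) \geq \frac1n H_\mu(\alpha^n) + \frac1n\int\log f_n\,d\mu - o(1)$. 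Letting $n\to\infty$ yields the bound for this $\mu$, and taking the supremum over $\mu$ gives one direction.

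The main obstacle, and the place where non-compactness really bites, is the reverse inequality $P(\F) \leq \sup\{\cdots\}$: one must produce, from the growth of the periodic partition functions, an actual invariant measure $\mu$ (a priori one only has a sequence of empirical-type measures built on periodic orbits, and on a countable Markov shift these can escape to infinity / lose mass). The strategy is to fix a cylinder $C_a$, build for each $n$ the measure $\nu_n$ equidistributed on the periodic points in $C_a$ of period $n$ weighted by $f_n$, normalize, push forward under the natural averaging $\frac1n\sum_{i=0}^{n-1}\sigma^i_*$, and then argue — using the Bowen property to get uniform Gibbs-type bounds and the hypothesis $\|L_{f_1}1\|_\infty < \infty$ to control mass — that a subsequence converges in an appropriate (possibly sub-probability) sense to an invariant probability $\mu$; upper semicontinuity of $h(\cdot) + \lim_n\frac1n\int\log f_n\,d\cdot$ along this sequence (which itself needs the almost-additive structure and an argument ruling out loss of entropy/integral in the limit, analogous to \cite{s1,iy}) then gives $P(\F) \leq h(\mu) + \lim_n\frac1n\int\log f_n\,d\mu$. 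Since much of this machinery — existence of Gibbs states, the pressure being well-defined and cylinder-independent — is already available from \cite{iy}, I would expect the cleanest route is to cite those inputs and concentrate the proof on the two approximation arguments above, with the delicate step being the justification that no entropy or integral is lost when passing to the weak-$*$ limit in the non-compact setting.
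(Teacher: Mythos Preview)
The paper does not prove Theorem~\ref{main1}; it is stated in the preliminaries (Section~\ref{sec:backg}) as background and explicitly attributed to \cite[Theorem 3.1]{iy}. There is thus no proof in the present paper to compare your proposal against.

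That said, it is worth flagging where your outline likely diverges from the argument in \cite{iy}, and where it has a real gap. The standard route for countable Markov shifts, going back to Sarig \cite{s1} and adopted in \cite{iy}, is not to build invariant measures directly from periodic-orbit empirical measures on the full non-compact space, but instead to approximate $\Sigma$ from within by an exhausting sequence of topologically mixing \emph{finite-alphabet} subshifts $Y_{l_m}$ (this is exactly the device recalled before Lemma~\ref{key} in the present paper). On each compact $Y_{l_m}$ the almost-additive variational principle is already available (Barreira \cite{b2}, Mummert \cite{m}), and one shows $P(\F\vert_{Y_{l_m}}) \nearrow P(\F)$; since any invariant measure on $Y_{l_m}$ is also an invariant measure on $\Sigma$, the hard inequality $P(\F) \leq \sup\{\cdots\}$ follows immediately, with no tightness argument and no weak-$*$ limit on the non-compact space.

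Your direct approach --- averaging weighted periodic-orbit measures on $\Sigma$ itself and passing to a limit --- runs into exactly the obstacle you name but do not resolve: Theorem~\ref{main1} does \emph{not} assume the BIP property, and the hypothesis $\|L_{f_1}1\|_\infty < \infty$ by itself does not prevent mass from escaping to infinity along such a sequence. So the sentence ``a subsequence converges \ldots\ to an invariant probability $\mu$'' is precisely the step that would fail in general, and the compact-exhaustion method is what replaces it.
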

A measure $\mu \in \M$ is said to be an
\emph{equilibrium measure} for $\F$ if
\begin{equation*}
P(\F)= h(\mu) + \lim_{n \to \infty} \frac{1}{n} \int \log f_n \ d \mu.
\end{equation*}

In \cite{b2,m}, the notion of Gibbs state for continuous functions was extended to the case of almost-additive sequences.

\begin{defi} \label{def-gibbs}
Let  $(\Sigma, \sigma)$ be a topologically mixing countable state Markov shift  and
$\F=\{\log f_n\}_{n=1}^{\infty}$ be an almost-additive  sequence on $\Sigma$. A measure $\mu \in \M$ is said to
be a \emph{Gibbs} state for $\F$ if there exist constants $C_{0}>0$ and $P \in \R$ such that for every
$n \in \N$ and every $x \in C_{i_1 \dots i_{n}}$ we have
\begin{equation}\label{gibbs}
\frac{1}{C_{0}} \leq	\frac{\mu(C_{i_1 \dots i_{n}})}{\exp(-nP)f_n(x)}	\leq C_{0}.
\end{equation}
 \end{defi}
In the case of a single continuous function defined over a countable Markov shift, there exists a combinatorial obstruction on $\Sigma$ that
prevents the  existence  of Gibbs states (see \cite{s3}). This, of course, is also the
case in the setting of almost additive sequences. The combinatorial condition on $\Sigma$ is the following.
\begin{defi} \label{BIP}
A countable Markov shift  $(\Sigma, \sigma)$ is said to satisfy  the \emph{big images and preimages property (BIP property)} if
there exists $\{ b_{1} , b_{2}, \dots, b_{n} \}$ in the alphabet $S$ such that
\begin{equation*}
\forall a \in S \textrm{ } \exists i,j \textrm{ such that } t_{b_{i}a}t_{ab_{j}}=1.
\end{equation*}
\end{defi}

In \cite[Theorem 4.1]{iy}, the existence of Gibbs states for an almost-additive sequence
of continuous functions defined over a BIP shift $\Sigma$ was established. Moreover, under a finite entropy
assumption it was shown that this Gibbs state is also an equilibrium measure.

\begin{teo}\label{main2}
Let $(\Sigma, \sigma)$ be a topologically mixing countable state Markov shift with the BIP property. Let $\F=\{\log f_n\}_{n=1}^{\infty}$ be an almost-additive Bowen
sequence defined on $\Sigma$. Assume that $\sum_{a\in S}\sup f_1\vert _{C_a}<\infty$.
Then there exits a unique invariant Gibbs state $\mu$ for $\F$ and it is mixing. Moreover, If $h(\mu)<\infty$, then
it is the unique equilibrium measure for $\F$.
\end{teo}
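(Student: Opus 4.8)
The plan is to establish Theorem \ref{main2} by combining transfer operator techniques with the Gurevich pressure formalism. First I would pass from the almost-additive sequence $\F$ to the single function $\log f_1$: the BIP property together with the summability hypothesis $\sum_{a\in S}\sup f_1|_{C_a}<\infty$ puts us in the setting where Sarig's generalized Ruelle--Perron--Frobenius theorem applies to (a suitable normalization of) the transfer operator $L_{f_1}$. More precisely, I would first normalize by replacing $f_1$ with $e^{-P(\F)}f_1$ (using $-\infty<P(\F)<\infty$ from Theorem \ref{main1}) so that we are looking for a fixed conformal measure, and then show that $L_{f_1}$ acting on a suitable space of locally H\"older observables has a positive eigenfunction $h$ and a conformal eigenmeasure $\nu$, with $L_{f_1}^* \nu = \lambda \nu$ and $L_{f_1} h = \lambda h$. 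The almost-additivity and the Bowen condition \eqref{bowenbound} are exactly what guarantee the bounded distortion estimates needed for this; one shows that $f_n$ and the Birkhoff product $f_1(x)f_1(\sigma x)\cdots f_1(\sigma^{n-1}x)$ are comparable up to a uniform multiplicative constant, so the two formalisms coincide and $\lambda = e^{P(\F)}$.

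Next I would define the candidate Gibbs state as $\mu = h\,\nu$ (after normalizing so that $\int h\, d\nu = 1$) and verify the Gibbs inequality \eqref{gibbs}. The key computation is to estimate $\mu(C_{i_1\cdots i_n})$ by iterating the conformality relation: $\nu(C_{i_1\cdots i_n})$ is comparable to $\lambda^{-n} f_n(x)/(\text{value of } h)$ up to the Bowen constant $M$ and the almost-additive constant $e^{C}$, and since $h$ is bounded away from $0$ and $\infty$ on each cylinder — indeed uniformly, because the BIP structure forces $h$ to be globally bounded above and below — one gets \eqref{gibbs} with a uniform $C_0$ and $P = P(\F)$. Invariance of $\mu$ follows from the eigenfunction/eigenmeasure relations in the standard way. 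For uniqueness and mixing, I would invoke the usual argument: any Gibbs state is equivalent to $\nu$ on cylinders with uniformly bounded Radon--Nikodym derivative, hence (by ergodic decomposition and the fact that the Gibbs property is incompatible with non-trivial invariant sets) it must equal $\mu$; mixing follows from the spectral gap of the normalized transfer operator on the H\"older space, which the BIP property supplies.

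Finally, for the equilibrium statement: assuming $h(\mu)<\infty$, I would show $P(\F) = h(\mu) + \lim_n \tfrac1n\int \log f_n\, d\mu$. The inequality $P(\F)\ge h(\mu)+\lim_n \tfrac1n\int\log f_n\, d\mu$ is immediate from the variational principle in Theorem \ref{main1} (one must first check $\lim_n\tfrac1n\int\log f_n\, d\mu\neq -\infty$, which follows from the Gibbs property and the summability assumption bounding the integrand). For the reverse inequality, I would use the Gibbs property to compute the entropy: from \eqref{gibbs}, for any finite partition refinement one has $-\sum \mu(C_{i_1\cdots i_n})\log\mu(C_{i_1\cdots i_n}) = nP(\F) - \int \log f_n\, d\mu + O(1)$, and dividing by $n$ and letting $n\to\infty$ (justifying the exchange of limit via the Shannon--McMillan--Breiman theorem and the finite-entropy hypothesis) yields $h(\mu) = P(\F) - \lim_n\tfrac1n\int\log f_n\, d\mu$, as desired.

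I expect the main obstacle to be the non-compactness: ensuring that the transfer operator genuinely has a bounded positive eigenfunction (not merely a $\sigma$-finite conformal measure) requires the full strength of the BIP property and the summability hypothesis, and carrying the bounded-distortion estimates uniformly across infinitely many cylinders — so that the constant $C_0$ in \eqref{gibbs} does not degenerate — is the delicate point. The second delicate point is justifying, in the finite-entropy case, the interchange of limits needed to recover the entropy from the asymptotics of $-\sum\mu(C)\log\mu(C)$; here one leans on the finite-entropy assumption together with the Gibbs bounds to dominate the relevant sequences.
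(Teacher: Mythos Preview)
Your plan contains a genuine gap at its very first step. You claim that almost-additivity together with the Bowen condition implies that $f_n(x)$ and the Birkhoff product $f_1(x)f_1(\sigma x)\cdots f_1(\sigma^{n-1}x)$ are comparable up to a \emph{uniform} multiplicative constant, and that therefore the almost-additive formalism for $\F$ reduces to the single-potential formalism for $\log f_1$. This is false. Iterating the almost-additive inequalities \eqref{A1}--\eqref{A2} gives only
\[
e^{-(n-1)C}\prod_{i=0}^{n-1} f_1(\sigma^i x)\ \le\ f_n(x)\ \le\ e^{(n-1)C}\prod_{i=0}^{n-1} f_1(\sigma^i x),
\]
so the comparison constant grows like $e^{nC}$ and the two formalisms do \emph{not} coincide; in particular $P(\F)$ need not equal $P(\log f_1)$, and a Gibbs state for $\log f_1$ is not a Gibbs state for $\F$. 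A second, related problem is that the Bowen condition \eqref{bowenbound} for $n=1$ only gives bounded oscillation of $\log f_1$ on $1$-cylinders; it does not give summable variations for $\log f_1$, so you are not entitled to invoke Sarig's Ruelle--Perron--Frobenius theorem for $L_{f_1}$ in the first place.

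For orientation: Theorem~\ref{main2} is not proved in this paper --- it is quoted from \cite[Theorem~4.1]{iy}. The argument there does not pass through a transfer operator for a single potential. Instead one approximates $\Sigma$ by an exhausting sequence of topologically mixing \emph{finite}-alphabet subshifts $Y_{l_m}$ (this uses the BIP structure), builds on each $Y_{l_m}$ a Gibbs measure for the restricted almost-additive sequence $\F|_{Y_{l_m}}$ directly from the partition sums $\alpha^{Y}_{n}=\sum_{i_1\cdots i_n}\sup\{f_n|_Y:\ y\in C_{i_1\cdots i_n}\}$ (the Bowen bound and almost-additivity give the needed submultiplicativity/supermultiplicativity of these sums, with constants independent of the approximating subshift), and then passes to a weak-star limit. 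The Gibbs constants are controlled uniformly along the approximation precisely because the BIP ``bridging words'' lie in $Y_{l_m}$ for all large $m$; a portion of this computation is reproduced in the present paper in the proof of Lemma~\ref{key}. Your entropy computation in the final paragraph is fine once a Gibbs state is in hand, but the construction of that Gibbs state cannot proceed by the reduction you propose.
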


\begin{rem} \label{rteo1} Note that $\sum_{a\in S}\sup f_1\vert _{C_a}<\infty$ implies that $-\infty<P(\F)<\infty$.
\end{rem}

\section{Existence of Gibbs equilibrium states} \label{sec:gibbs}
In Theorem \ref{main2} we established conditions under which the existence of a Gibbs
state $\mu$ for an almost-additive sequence   $\F=\{\log f_n\}_{n=1}^{\infty}$ is guaranteed. It might happen that $h(\mu)=\infty$ and that $\lim_{n\to \infty} \frac{1}{n} \int \log f_n \  d\mu = -\infty$. In this case, since the sum of these two quantities is meaningless, we don't say that the measure $\mu$ is an equilibrium measure. However, if
$h(\mu) < \infty$ then  $\mu$ is indeed an equilibrium measure.
The purpose of the following section is to establish other conditions that would also imply that the Gibbs state $\mu$ is
an equilibrium measure. Our result could be compared with those in \cite{mu}, where a single function (instead of
an almost-additive sequence) is studied.

In the rest of the paper, we identify a countable alphabet $S$ with $\N$.
%


\begin{prop}\label{chara}
Let $(\Sigma, \sigma)$ be a topologically mixing countable state Markov shift with the BIP property. Let $\F=\{\log f_n\}_{n=1}^{\infty}$ be an almost-additive Bowen defined on $\Sigma$ satisfying
$\sum_{i\in \N} \sup f_1\vert_{C_{i}}<
\infty$ and   $\mu_{\F}$ be the unique invariant Gibbs state for $\F$.
The followings statements are equivalent:
\begin{enumerate}
\item $h_{\mu_{\F}}(\sigma)<\infty$, \label{1}
\item  $\lim_{n \rightarrow \infty}\frac{1}{n}\int \log f_n d\mu_{\F}>-\infty$, \label{2}
\item $\int \log f_1 d\mu_{\F}>-\infty$, \label{3}
\item $\sum_{i=1}^{\infty}\sup \{\log f_1(x):x\in C_{i}\}\sup \{f_1(x):x\in C_{i}\}>-\infty$.\label{4}
\end{enumerate}
Therefore, if one of the above is satisfied, then $\mu_{\F}$ is the unique Gibbs equilibrium state for $\F$.
\end{prop}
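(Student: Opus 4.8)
The plan is to prove the cycle of implications $(\ref{1}) \Rightarrow (\ref{2}) \Rightarrow (\ref{3}) \Rightarrow (\ref{4}) \Rightarrow (\ref{1})$, and then to note that the final assertion follows immediately from Theorem \ref{main2}: by that theorem $\mu_{\F}$ is the unique Gibbs state, and since $h_{\mu_{\F}}(\sigma) < \infty$ is one of the equivalent conditions, Theorem \ref{main2} also gives that $\mu_{\F}$ is the unique equilibrium measure.

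For the implications, the key observations are the following. First, since $\F$ is almost-additive, the sequence $\frac{1}{n}\int \log f_n \, d\mu_{\F}$ is, up to the additive constant $C$, subadditive, so its limit equals $\inf_n \frac{1}{n}\int \log f_n \, d\mu_{\F}$ and in particular is bounded above by $\int \log f_1 \, d\mu_{\F} + C$; conversely almost-additivity and invariance give $\int \log f_n \, d\mu_{\F} \geq n \int \log f_1 \, d\mu_{\F} - (n-1)C$ (iterating \eqref{A1} and using $\int \log f_1 \circ \sigma^k \, d\mu_{\F} = \int \log f_1 \, d\mu_{\F}$), so $\lim_n \frac{1}{n}\int \log f_n \, d\mu_{\F} \geq \int \log f_1 \, d\mu_{\F} - C$. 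This pins the limit and $\int \log f_1 \, d\mu_{\F}$ to be finite or infinite together, giving $(\ref{2}) \Leftrightarrow (\ref{3})$. For $(\ref{1}) \Leftrightarrow (\ref{3})$ (combined with $(\ref{4})$), I would use the Gibbs property \eqref{gibbs} applied to cylinders of length $1$: there is $C_0$ with $C_0^{-1} e^{-P} f_1(x) \leq \mu_{\F}(C_i) \leq C_0 e^{-P} f_1(x)$ for $x \in C_i$, and hence, up to uniformly bounded multiplicative errors and the bounded factor $A_1 \leq M$ from the Bowen condition, $\mu_{\F}(C_i) \asymp \sup\{f_1(x) : x \in C_i\}$. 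Then the entropy can be estimated via the partition into $1$-cylinders: by Abramov/Rokhlin-type bounds, $h_{\mu_{\F}}(\sigma) < \infty$ is equivalent to $\sum_i -\mu_{\F}(C_i) \log \mu_{\F}(C_i) < \infty$ (here the generating partition is the partition into $1$-cylinders, so $h_{\mu_{\F}}(\sigma) \leq H_{\mu_{\F}}(\text{partition into }1\text{-cylinders})$, and finiteness of the latter is what matters). Substituting the Gibbs estimate, $-\mu_{\F}(C_i)\log\mu_{\F}(C_i) \asymp \sup\{f_1|_{C_i}\}\big(-\log\sup\{f_1|_{C_i}\} + O(1)\big)$; since $\sum_i \sup f_1|_{C_i} < \infty$ the $O(1)$ part is summable, so finiteness of the entropy sum is equivalent to $\sum_i \sup\{f_1|_{C_i}\}\big(-\log \sup\{f_1|_{C_i}\}\big) < \infty$, which (since $\log \sup f_1|_{C_i}$ and $\sup \log f_1|_{C_i}$ differ by the bounded $\log A_1$) is exactly condition $(\ref{4})$. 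Finally $(\ref{3})$ and $(\ref{4})$ are linked by the same Gibbs estimate: $\int \log f_1 \, d\mu_{\F} = \sum_i \int_{C_i} \log f_1 \, d\mu_{\F}$, and on $C_i$ both $\log f_1$ and $\mu_{\F}$-measure are controlled, up to bounded factors, by $\sup\{\log f_1|_{C_i}\}$ and $\sup\{f_1|_{C_i}\}$ respectively; since $\log f_1 \leq \log \sup f_1|_{C_i}$ and this is eventually negative (as $\sup f_1|_{C_i} \to 0$), the series $\sum_i \sup\{\log f_1|_{C_i}\} \sup\{f_1|_{C_i}\}$ and $\int \log f_1 \, d\mu_{\F}$ converge or diverge to $-\infty$ together.

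The main obstacle I expect is the careful handling of the entropy side, specifically justifying that $h_{\mu_{\F}}(\sigma)$ is finite if and only if the sum $\sum_i -\mu_{\F}(C_i)\log \mu_{\F}(C_i)$ is finite. Since the partition into $1$-cylinders is generating, one inequality ($h_{\mu_{\F}}(\sigma) \leq H_{\mu_{\F}}$ of that partition) is immediate; the converse, that infinite $H_{\mu_{\F}}$ of this partition forces infinite Kolmogorov-Sinai entropy, needs a genuine argument in the countable-alphabet setting (this is essentially Abramov's formula / the fact that for BIP shifts with the Gibbs measure the relevant partition has finite conditional entropy contributions), and one must be careful that the bounded distortion constants $C_0$, $M$, and $e^{-P}$ really do drop out uniformly in $i$. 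I would isolate this as a preliminary lemma (or cite the relevant statement from \cite{iy} or \cite{s1,s3}) and then the rest of the proof is the bookkeeping with the Gibbs and Bowen bounds sketched above.
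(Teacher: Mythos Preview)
Your treatment of $(\ref{2}) \Leftrightarrow (\ref{3})$ and $(\ref{3}) \Leftrightarrow (\ref{4})$ is correct and essentially identical to the paper's: both use almost-additivity and invariance for the former, and the Gibbs and Bowen bounds on $1$-cylinders for the latter.

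Where you diverge from the paper is in linking $(\ref{1})$ to the rest. You propose comparing $h_{\mu_{\F}}(\sigma)$ with the static entropy $-\sum_i \mu_{\F}(C_i)\log\mu_{\F}(C_i)$ of the $1$-cylinder partition, and you correctly flag the hard direction (that infinite static entropy of a generating partition forces infinite Kolmogorov--Sinai entropy) as the obstacle. This is a real gap: for general ergodic systems that implication is false, and while for a Gibbs measure one can push it through, doing so amounts to first proving $h_{\mu_{\F}}(\sigma) = P(\F) - \lim_n \frac{1}{n}\int \log f_n\, d\mu_{\F}$, which is more than you need. Moreover, your stated cycle requires $(\ref{1}) \Rightarrow (\ref{2})$, and the only route you give to it is through this hard direction.

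The paper bypasses all of this by handling $(\ref{1}) \Leftrightarrow (\ref{2})$ directly with the variational principle, using tools you already invoke. For $(\ref{1}) \Rightarrow (\ref{2})$: if $h_{\mu_{\F}}(\sigma) < \infty$, Theorem~\ref{main2} (which you cite for the final assertion anyway) makes $\mu_{\F}$ an equilibrium measure, so $P(\F) = h_{\mu_{\F}}(\sigma) + \lim_n \frac{1}{n}\int \log f_n\, d\mu_{\F}$; since $P(\F)$ and $h_{\mu_{\F}}(\sigma)$ are finite, so is the limit. For $(\ref{2}) \Rightarrow (\ref{1})$: the variational principle (Theorem~\ref{main1}) gives $P(\F) \geq h_{\mu_{\F}}(\sigma) + \lim_n \frac{1}{n}\int \log f_n\, d\mu_{\F}$, and since $P(\F) < \infty$ (Remark~\ref{rteo1}) and the limit is $> -\infty$, one reads off $h_{\mu_{\F}}(\sigma) < \infty$. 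This two-line argument closes the cycle with no partition-entropy lemma needed; your easy direction $h_{\mu_{\F}}(\sigma) \leq H_{\mu_{\F}}(\alpha)$ then becomes superfluous rather than half of a delicate equivalence.
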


\begin{proof}
Suppose we have (\ref{1}). Then it follows from Theorem \ref{main2}
that $\mu_{\F}$ is the unique Gibbs equilibrium state for $\F$. Thus (1) implies (2).
Assume now (\ref{2}). Since $\sum_{i\in \N} \sup f_1\vert_{C_i}<
\infty$ implies $-\infty <P(\F)<\infty$, it is a direct consequence of
the variational principle that $h_{\mu_{\F}}(\sigma)<\infty$. Thus (2) implies (1).
Next we show that  (\ref{2}) if and only if (\ref{3}). It directly follows from Definition \ref{aaa} (equations  (\ref{A1}) and (\ref{A2})), that
\begin{equation*}
-\frac{n-1}{n}C+\int \log f_{1}(x)d\mu_{\F}\leq \frac{1}{n}\int \log f_n d\mu_{\F}\leq \frac{n-1}{n}C+ \int \log f_1
d\mu_{\F}.
\end{equation*}
Letting $n\rightarrow\infty$, we obtain the result. Now we show that (\ref{3}) if and only if (\ref{4}).
Fix $i\in \N$. Since $\F$ is a  Bowen sequence on $\Sigma$, there exists $M$ such that
\begin{equation}\label{bounv}
\sup\left\{\frac{f_1(x)}{f_1(y)}:x_1=y_1=i \right \}\leq M.
\end{equation}
This implies that
$-\log M\leq \log f_{1}(x)-\log f_{1}(y)\leq \log M$ for any $x, y \in C_i$. Therefore,
\begin{equation*}
 \sup\left\{\log \frac{f_{1}(x)}{M}:x\in C_i \right\}\leq \inf \left\{\log f_1(x):x\in C_i \right\}.
\end{equation*}
Let $x_{i}$ be an arbitrary point in $C_i$ , then
\begin{align*}
\int \log f_1 d\mu_{\F}&= \sum_{i=1}^{\infty} \int_{C_i} \log f_1 d\mu_{\F}\\
&\geq \sum_{i=1}^{\infty} \inf \left\{\log f_1(x):x\in C_i \right\}e^{-P(\F)}\frac{f_1(x_i)}{C_0}\\
&\geq \sum_{i=1}^{\infty} \sup \left\{\log \frac{f_1(x)}{M}:x\in C_i \right\} e^{-P(\F)}\frac{f_1(x_i)}{C_0},
\end{align*}
for some $C_{0}>0$, where in the second inequality we used the definition of Gibbs sate. Using (\ref{bounv}), we have
\begin{equation*}
\int \log f_1 d\mu_{\F}\geq \frac{e^{-P(\F)}}{C_0 M}\sum_{i=1}^{\infty}
\sup \left\{\log \frac{f_1(x)}{M}:x\in C_i \right\} \sup\{f_1(x):x\in C_i\}.
\end{equation*}
Therefore, (\ref{4}) implies (\ref{3}).
To see that (\ref{3}) implies (\ref{4}), consider
\begin{align*}
 \int \log f_1 d\mu_{\F}&=\sum_{i=1}^{\infty}\int _{C_i}\log f_1 d\mu_{\F}\\
&\leq e^{-P(\F)}C_0\sum_{i=1}\sup \{\log f_1 (x):x\in C_i\}f_1(x_i)\\
&\leq e^{-P(\F)}C_0\sum_{i=1}\sup \{\log f_1(x):x\in C_i\}\sup\{f_1(x):x\in C_i\}.
\end{align*}
Therefore, the desired result is obtained.
\end{proof}

The rest of the section is devoted to  study the relation between the Gibbs equilibrium state for $\log f_1$ on $\Sigma$ and
that for $\F=\{\log f_n\}_{n=1}^{\infty}$ on $\Sigma$.
We begin establishing conditions on a continuous function $\log f_1$ so that it has a Gibbs equilibrium state (see \cite{mu, s1,s3} for related work).

\begin{prop}\label{general}
Let $(\Sigma, \sigma)$ be a topologically mixing countable Markov shift with the BIP property. Suppose that
$\log f_1$ is a continuous function on $\Sigma$ satisfying:
\begin{enumerate}
\item $\sup_{n\in \N}\left\{\frac{f_1(x)f_1(\sigma x)\dots
f_1(\sigma^{n-1}x)}{f_1(y)f_1(\sigma x)\dots
f_1(\sigma^{n-1}y)}: x_{i}=y_{i}, 1\leq i\leq n \right\}<\infty$, \label{pp1}
\item  $\sum_{i\in \N} e^{\sup\log f_1\vert_{C_i}}<\infty$. \label{pp2}
\end{enumerate}
Then there exists a unique invariant Gibbs state  $\mu_{\log f_1}$ for $\log f_1$.
Moreover,
\begin{enumerate}
\item If $\int \log f_1 d\mu_{\log f_1}>-\infty$, then
$\mu_{\log f_1}$ is the unique Gibbs equilibrium state for $\log f_1$.
\item $\int \log f_1 d\mu_{\log f_1}>-\infty$ if and only if  $\sum_{i=1}^{\infty}\sup \{\log f_1(x):x\in C_{i}\}\sup \{f_1(x):x\in C_{i}\}>-\infty$.
\end{enumerate}
\end{prop}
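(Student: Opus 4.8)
The plan is to deduce this from Theorem \ref{main2} and Proposition \ref{chara} by passing to the additive sequence of Birkhoff products of $\log f_1$. For $n \geq 2$ and $x \in \Sigma$ put
\begin{equation*}
f_n(x) := f_1(x)\, f_1(\sigma x) \cdots f_1(\sigma^{n-1}x),
\end{equation*}
so that, as noted after Definition \ref{def:pre}, the sequence $\G := \{\log f_n\}_{n=1}^{\infty}$ is additive, in particular almost-additive with constant $C=0$ in Definition \ref{aaa}. For this choice of the $f_n$, hypothesis (\ref{pp1}) says precisely that $\sup_{n\in\N} A_n < \infty$ for the numbers $A_n$ of Definition \ref{bowen} attached to $\G$, that is, that $\G$ is a Bowen sequence. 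Since $f_1 > 0$ we have $\sup f_1|_{C_i} = e^{\sup \log f_1|_{C_i}}$ for every $i$, so hypothesis (\ref{pp2}) is exactly the condition $\sum_{i\in\N}\sup f_1|_{C_i} < \infty$. Hence $(\Sigma,\sigma)$ and $\G$ satisfy all the hypotheses of Theorem \ref{main2}.

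From here the argument is a dictionary between the single-potential and the sequence languages. Theorem \ref{main2} produces a unique invariant, mixing measure $\mu$ satisfying the Gibbs inequality (\ref{gibbs}) for $\G$; since $f_n(x) = f_1(x)\cdots f_1(\sigma^{n-1}x)$, this inequality is literally the classical Gibbs bound for the potential $\log f_1$, so $\mu_{\log f_1} := \mu$ is the unique invariant Gibbs state for $\log f_1$. Moreover $\lim_{n\to\infty}\frac1n\int \log f_n\,d\nu = \int \log f_1\,d\nu$ for every $\nu\in\M$ and $P(\G)$ reduces to the Gurevich pressure of $\log f_1$, so a measure is an equilibrium state for $\G$ if and only if it is one for $\log f_1$. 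Combining these identifications with the final sentence of Theorem \ref{main2} shows that if $h(\mu_{\log f_1}) < \infty$, then $\mu_{\log f_1}$ is the unique Gibbs equilibrium state for $\log f_1$.

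It remains to obtain the two ``Moreover'' clauses. For this we apply Proposition \ref{chara} to the sequence $\G$; its hypotheses --- topologically mixing, BIP, almost-additive Bowen, and $\sum_i\sup f_1|_{C_i}<\infty$ --- were verified above. For $\G$ one has $\lim_{n}\frac1n\int \log f_n\,d\mu_{\log f_1} = \int \log f_1\,d\mu_{\log f_1}$, so statements (\ref{2}) and (\ref{3}) of Proposition \ref{chara} both coincide with the condition ``$\int \log f_1\,d\mu_{\log f_1} > -\infty$''. The equivalences (\ref{1}) $\Leftrightarrow$ (\ref{2}) $\Leftrightarrow$ (\ref{3}) of Proposition \ref{chara} together with the previous paragraph yield clause (1), while the equivalence (\ref{3}) $\Leftrightarrow$ (\ref{4}) of Proposition \ref{chara} is clause (2) verbatim, as the first term of $\G$ is $f_1$.

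The only place requiring care is the verification in the first paragraph: one must check that hypotheses (\ref{pp1}) and (\ref{pp2}) really translate into Definition \ref{bowen} and into the summability hypothesis of Theorem \ref{main2} for $\G$, and that the notions of Gibbs state and of equilibrium state for the additive sequence $\G$ coincide with the classical ones for the single potential $\log f_1$. Once that dictionary is set, no further estimates are needed --- everything is inherited from Theorem \ref{main2} and Proposition \ref{chara}.
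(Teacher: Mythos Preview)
Your proof is correct and follows essentially the same approach as the paper: both pass to the additive Birkhoff sequence $\G=\{\log f_n\}$ built from $\log f_1$, verify that hypotheses (\ref{pp1}) and (\ref{pp2}) translate into the Bowen and summability conditions needed for Theorem \ref{main2}, and then identify the sequence notions (Gibbs state, pressure, equilibrium) with the single-potential ones. The only cosmetic difference is that for the ``Moreover'' clauses you invoke Proposition \ref{chara} directly, whereas the paper says the second clause is proved ``in the exact same way'' as there and handles the first clause by the integral identity; your packaging is arguably cleaner.
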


\begin{proof}
Let $g(x)=\log f_1(x)$ and $g_n(x)= e^{g(x)+g(\sigma x)+\dots+g(\sigma^{n-1}x)}$.
Define $\Phi=\{\log g_n\}_{n=1}^{\infty}$. We have that $\Phi$ is an additive Bowen sequence.
Indeed, we only need to prove that $\Phi$ is a Bowen sequence. In order to do so,  we note that for each $n\in\N$ we have
\begin{eqnarray*}
\sup \left\{\frac{g_n(x)}{g_n(y)}: x_i=y_i, 1\leq i \leq n \right\}
&=& \\ \sup \left\{\frac{f_1(x)f_1(\sigma x)\dots
f_1(\sigma^{n-1}x)}{f_1(y)f_1(\sigma x)\dots
f_1(\sigma^{n-1}y)}: x_i=y_i, 1\leq i\leq n \right\}.\end{eqnarray*}
 The claim now follows from assumption (\ref{pp1}). It is a direct consequence of assumption (\ref{pp2}) that
$-\infty<P(\Phi)<\infty$. Therefore, by Theorem \ref{main2}, there exists  a unique invariant Gibbs state $\mu_{\Phi}$ for $\Phi$. Since $P(\Phi)=P(\log f_1)$ we have that $\mu_{\Phi}$ is the unique
invariant Gibbs state for $\log f_1$. Moreover, since for any $\mu\in \M$ we have that $\lim_{n\rightarrow \infty}\frac{1}{n}\int \log g_n d\mu=\int \log f_1 d\mu$, then if $\int \log f_1 d\mu_{\Phi}>-\infty$, it follows that
$\mu_{\Phi}$ is the unique Gibbs equilibrium state for $\log f_1$.
This proves the first part of the Proposition. The second claim is proved in the exact same way as  the corresponding one in Proposition \ref{chara}.
%
\end{proof}

\begin{coro}\label{logf1}
Let  $(\Sigma, \sigma)$ be a topologically mixing countable Markov shift with the BIP property. Suppose that the
function
$\log f_1:\Sigma \to \R$ is of summable variation and satisfies  $\sum_{i\in \N} e^{\sup\log f_1\vert_{C_i}}<\infty$. Then,
\begin{equation*}
\sup_{n\in \N}\left\{\frac{f_1(x)f_1(\sigma x)\dots
f_1(\sigma^{n-1}x)}{f_1(y)f_1(\sigma x)\dots
f_1(\sigma^{n-1}y)}: x_i=y_i, 1\leq i\leq n \right\}<\infty
\end{equation*} and
 \begin{enumerate}
\item If $\int \log f_1 d\mu_{\log f_1}>-\infty$, then
$\mu_{\log f_1}$ is the unique Gibbs equilibrium state for $\log f_1$.
\item $\int \log f_1 d\mu_{\log f_1}>-\infty$ if and only if  $\sum_{i=1}^{\infty}\sup \{\log f_1(x):x\in C_{i}\}\sup \{f_1(x):x\in C_{i}\}>-\infty$.
\end{enumerate}

\end{coro}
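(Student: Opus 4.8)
The plan is to deduce the corollary directly from Proposition~\ref{general}; the only thing that needs to be checked is that summable variation of $\log f_1$ implies hypothesis (\ref{pp1}) of that proposition, since hypothesis (\ref{pp2}) is assumed verbatim in the corollary. Recall that $\log f_1$ being \emph{of summable variation} means $\sum_{n=1}^{\infty} V_n(\log f_1)<\infty$, where
\[
V_n(\log f_1)=\sup\left\{ \left|\log f_1(x)-\log f_1(y)\right| : x_i=y_i \text{ for } 1\le i\le n\right\}.
\]
In particular $\log f_1$ is (uniformly) continuous, so the hypotheses of Proposition~\ref{general} are meaningful.

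First I would fix $n\in\N$ and $x,y\in\Sigma$ with $x_i=y_i$ for $1\le i\le n$, and write
\[
\log\frac{f_1(x)f_1(\sigma x)\dots f_1(\sigma^{n-1}x)}{f_1(y)f_1(\sigma y)\dots f_1(\sigma^{n-1}y)}
=\sum_{k=0}^{n-1}\left(\log f_1(\sigma^k x)-\log f_1(\sigma^k y)\right).
\]
Since $x$ and $y$ agree on their first $n$ coordinates, the points $\sigma^k x$ and $\sigma^k y$ agree on their first $n-k$ coordinates, hence $\left|\log f_1(\sigma^k x)-\log f_1(\sigma^k y)\right|\le V_{n-k}(\log f_1)$. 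Re-indexing, the right-hand side is bounded in absolute value by $\sum_{j=1}^{n}V_j(\log f_1)\le\sum_{j=1}^{\infty}V_j(\log f_1)$, which is finite and independent of $n$. Exponentiating and taking the supremum over $n\in\N$ gives
\[
\sup_{n\in\N}\left\{\frac{f_1(x)f_1(\sigma x)\dots f_1(\sigma^{n-1}x)}{f_1(y)f_1(\sigma y)\dots f_1(\sigma^{n-1}y)}: x_i=y_i,\ 1\le i\le n\right\}\le \exp\Bigl(\sum_{j=1}^{\infty}V_j(\log f_1)\Bigr)<\infty,
\]
which is precisely condition (\ref{pp1}) and also the first displayed assertion of the corollary.

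Finally, having verified both (\ref{pp1}) and (\ref{pp2}) for $\log f_1$, I would simply invoke Proposition~\ref{general}: it yields the existence and uniqueness of the invariant Gibbs state $\mu_{\log f_1}$, together with statements (1) and (2), which are exactly the remaining assertions of the corollary. I do not expect a genuine obstacle here: the argument is a routine telescoping estimate, and the only points requiring slight care are the index bookkeeping in the variation sum and the observation that summable variation forces $\log f_1$ to be continuous, so that Proposition~\ref{general} is indeed applicable.
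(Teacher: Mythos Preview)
Your proof is correct and follows essentially the same route as the paper: decompose the logarithm of the product ratio into a telescoping sum, bound each summand by the $j$-th variation of $\log f_1$, and then invoke Proposition~\ref{general}. Your version is in fact slightly tidier in its index bookkeeping and makes explicit the observation that summable variation implies continuity of $\log f_1$.
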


\begin{proof}
Since $\log f_1$ is summable variation, there exists $N \in \R$ such that
$$\sum_{n=1}^{\infty}\sup\left\{\left\vert \log \frac{f_1(x)}{f_1(y)} \right\vert : x_i=y_i, 1\leq i\leq n \right\}\leq N.$$
Therefore, for $x, y\in \Sigma$ such that $x_i=y_i$, $1\leq i\leq n$ we have that
\begin{eqnarray*}
 \log \frac{f_1(x)f_1(\sigma x)\dots f_1(\sigma^{n-1}x)}{f_1(y)f_1(\sigma y)\dots f_1(\sigma^{n-1}y)} &\leq&\\
 \sup \left\{\log \frac{f_1(x)}{f_1(y)}:x_i=y_i, 1\leq i\leq n \right\} &+&\\
\sup \left\{\log \frac{f_1(\sigma x)}{f_1(\sigma y)}:(\sigma x)_i
=(\sigma y)_i, 1\leq i\leq n-1 \right\}+ &\dots& \\
+\dots+\sup\left\{\log \frac{f_1(\sigma^{n-1} x)}{f_1(\sigma^{n-1} y)}:(\sigma^{n-1} x)_i=(\sigma^{n-1} y)_i,
i=1 \right\}
&\leq N.
\end{eqnarray*}
Then
$$\sup_{n\in \N}\left\{\frac{f_1(x)f_1(\sigma x)\dots f_1(\sigma^{n-1}x)}{f_1(y)f_1(\sigma y)
\dots f_1(\sigma^{n-1}y)}: x_i=y_i, 1\leq i\leq n \right\} \leq e^{N},$$
and the result follows by Proposition \ref{general}.
\end{proof}

The next theorem characterises the existence of a Gibbs equilibrium state for $\F$ in terms of the existence of the Gibbs equilibrium
measure for $\log f_1$.

\begin{teo} \label{main}
Let $\Sigma$ be a topologically mixing countable Markov shift with the BIP property.
Let $\F=\{\log f_n\}_{n=1}^{\infty}$ be an almost-additive Bowen sequence on $\Sigma$ satisfying  $\sum_{i\in \N} \sup f_1\vert_{C_i}<
\infty$ and
\begin{equation*}
\sup_{n\in \N}\left\{\frac{f_1(x)f_1(\sigma x)\dots
f_1(\sigma^{n-1}x)}{f_1(y)f_1(\sigma x)\dots
f_1(\sigma^{n-1}y)}: x_i=y_i, 1\leq i\leq n \right\}<\infty.
\end{equation*}
Then $\F$ has a unique Gibbs equilibrium state $\mu_{\F}$ for $\F$ if and only if $\log f_1$ has a unique Gibbs equilibrium
state $\mu_{\log f_1}$ for $\log f_1$.
\end{teo}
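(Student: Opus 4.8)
The plan is to reduce everything to the additive sequence $\Phi = \{\log g_n\}_{n=1}^\infty$ with $g_n(x) = f_1(x)f_1(\sigma x)\cdots f_1(\sigma^{n-1}x)$, which is the bridge between the dynamics of $\log f_1$ and that of $\F$. First I would record the key facts established so far. By Theorem \ref{main2} (applicable since $\sum_{i}\sup f_1|_{C_i}<\infty$, hence $-\infty<P(\F)<\infty$ by Remark \ref{rteo1}), $\F$ has a unique invariant Gibbs state $\mu_\F$; and by Proposition \ref{general} (whose hypothesis (\ref{pp1}) is exactly the displayed boundedness condition, and whose hypothesis (\ref{pp2}), $\sum_i e^{\sup \log f_1|_{C_i}} = \sum_i \sup f_1|_{C_i}<\infty$, also holds), $\log f_1$ has a unique invariant Gibbs state $\mu_{\log f_1}$, which coincides with $\mu_\Phi$. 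So in both cases the Gibbs state exists unconditionally; the issue is purely whether it is an \emph{equilibrium} state, equivalently (by the variational principle, since both pressures are finite) whether it has finite entropy, equivalently whether the relevant integral is finite.

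The heart of the argument is then to show that the four conditions in Proposition \ref{chara} for $\mu_\F$ are equivalent to the analogous conditions for $\mu_{\log f_1}$, and in fact that condition (\ref{4}) there,
\[
\sum_{i=1}^\infty \sup\{\log f_1(x):x\in C_i\}\,\sup\{f_1(x):x\in C_i\} > -\infty,
\]
is a \emph{single condition that does not mention the measure at all}. Proposition \ref{chara} gives that this condition is equivalent to $h_{\mu_\F}(\sigma)<\infty$, hence (by Theorem \ref{main2}) to $\mu_\F$ being the Gibbs equilibrium state for $\F$. On the other side, Proposition \ref{general}(2) (or Corollary \ref{logf1}(2)) gives that the \emph{same} condition is equivalent to $\int \log f_1\, d\mu_{\log f_1}>-\infty$, which by Proposition \ref{general}(1) is equivalent to $\mu_{\log f_1}$ being the Gibbs equilibrium state for $\log f_1$. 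Chaining these equivalences through the common combinatorial condition yields: $\mu_\F$ is the unique Gibbs equilibrium state for $\F$ $\iff$ (\ref{4}) holds $\iff$ $\mu_{\log f_1}$ is the unique Gibbs equilibrium state for $\log f_1$. Uniqueness in each direction is already part of Theorem \ref{main2} and Proposition \ref{general}, so nothing extra is needed there.

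The main obstacle to watch is making sure the hypotheses of the cited results line up exactly. Proposition \ref{chara} is stated with hypothesis $\sum_i \sup f_1|_{C_i}<\infty$, which matches. Proposition \ref{general} needs hypothesis (\ref{pp2}) written as $\sum_i e^{\sup\log f_1|_{C_i}}<\infty$; I would note explicitly that $e^{\sup\log f_1|_{C_i}} = \sup\{f_1(x):x\in C_i\} = \sup f_1|_{C_i}$ (using continuity and positivity of $f_1$), so the two summability hypotheses are literally the same. The boundedness condition (\ref{pp1}) is quoted verbatim in the statement of Theorem \ref{main}, so it transfers directly. With these identifications in place the proof is just the two-line concatenation of equivalences above; I would write it as: ``By Proposition \ref{chara}, $\mu_\F$ is the Gibbs equilibrium state for $\F$ iff condition (\ref{4}) holds. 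By Proposition \ref{general} — whose hypotheses hold as noted — the same condition (\ref{4}) holds iff $\int\log f_1\,d\mu_{\log f_1}>-\infty$ iff $\mu_{\log f_1}$ is the Gibbs equilibrium state for $\log f_1$. Combining, the two are equivalent, and uniqueness is guaranteed in each case.''
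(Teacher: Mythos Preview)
Your proposal is correct and follows essentially the same route as the paper: both arguments pivot on the measure-free summability condition (\ref{4}) of Proposition \ref{chara}, using Proposition \ref{chara} to link it to $\mu_\F$ being an equilibrium state and Proposition \ref{general} to link it to $\mu_{\log f_1}$ being one. The only difference is cosmetic: in the forward direction the paper re-derives the implication ``(\ref{4}) $\Rightarrow \int \log f_1\,d\mu_{\log f_1}>-\infty$'' by writing out the Gibbs--Bowen inequality explicitly, whereas you simply cite Proposition \ref{general}(2), which is a cleaner packaging of the same step.
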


\begin{proof}
To begin with, note that $P(\log f_1) <\infty$ and that the assumptions made on $f_1$ together with Proposition \ref{general} imply that there exists a
Gibbs state $\mu_{\log f_1}$ for $\log f_1$.

Let us first assume that there exists a unique  Gibbs equilibrium state, $\mu_{\F}$, for $\F$ and prove that $\mu_{\log f_1}$ is an equilibrium measure for $\log f_1$. Since  $\mu_{\F}$ is a Gibbs equilibrium state we have that  $\lim_{n\rightarrow \infty}\frac{1}{n}\int \log f_n d\mu_{\F}>-\infty$. Proposition \ref{chara} then implies that
 $$\sum_{i=1}^{\infty}\sup \{\log f_1(x):x\in C_{i}\}\sup \{f_1(x):x\in C_{i}\}>-\infty.$$
Thus, there exist constants $C_0', M' \in \R^+$ such that
\begin{equation*}
\int \log f_1 d\mu_{\log f_1}\geq \frac{e^{-P(\log f_1)}}{C_0'M'}\sum_{i=1}^{\infty}
\sup \left\{\log \frac{f_1(x)}{M'}:x\in C_i \right\} \sup\{f_1(x):x\in C_i\}.
\end{equation*}
Therefore, $\int \log f_1 d\mu_{\log f_1}>-\infty$ and we can conclude that $\mu_{\log f_1}$ is the unique Gibbs equilibrium state for $\log f_1$.

Conversely, suppose $\log f_1$ has a unique Gibbs equilibrium state $\mu_{\log f_1}$.
Then $\int \log f_1 d\mu_{\log f_1}>-\infty$ and Proposition \ref{general} implies that
 $$\sum_{i=1}^{\infty}\sup \{\log f_1(x):x\in C_{i}\}\sup \{f_1(x):x\in C_{i}\}>-\infty.$$
Therefore, applying Theorem \ref{main2} and Proposition \ref{chara} we obtain that
$\mu_{\F}$ is the unique Gibbs equilibrium state for $\F$.
\end{proof}

\begin{rem}
A similar result to that in Corollary \ref{logf1} but under different regularity assumptions was proved in \cite{mu}.
\end{rem}

\begin{coro}
Let $\Sigma$ be a topologically mixing countable Markov shift with the BIP property.
Let $\F=\{\log f_n\}_{n=1}^{\infty}$ be an almost-additive Bowen sequence on $\Sigma$, where $\log f_1$ is a function of summable variation that satisfies
$\sum_{i\in \N} \sup f_1\vert_{C_i}<
\infty$. Then $\F$ has a unique Gibbs equilibrium state $\mu_{\F}$ for $\F$ if and only if $\log f_1$ has a unique Gibbs equilibrium
state $\mu_{\log f_1}$ for $\log f_1$.
\end{coro}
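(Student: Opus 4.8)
The plan is to deduce this corollary directly from Theorem~\ref{main}, so the only real work is to verify that the hypothesis ``$\log f_1$ has summable variation together with $\sum_{i\in\N}\sup f_1|_{C_i}<\infty$'' forces the two technical assumptions required in Theorem~\ref{main}, namely the almost-multiplicativity estimate
\begin{equation*}
\sup_{n\in \N}\left\{\frac{f_1(x)f_1(\sigma x)\dots f_1(\sigma^{n-1}x)}{f_1(y)f_1(\sigma x)\dots f_1(\sigma^{n-1}y)}: x_i=y_i,\ 1\leq i\leq n \right\}<\infty
\end{equation*}
and the summability condition $\sum_{i\in\N}\sup f_1|_{C_i}<\infty$. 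The latter is literally part of the hypothesis, so nothing is needed there. For the former, I would invoke Corollary~\ref{logf1}: observe that summable variation of $\log f_1$ is exactly the regularity assumption of that corollary, and note that $\sum_{i\in\N}\sup f_1|_{C_i}<\infty$ implies $\sum_{i\in\N}e^{\sup\log f_1|_{C_i}}<\infty$ (these are the same series, since $e^{\sup\log f_1|_{C_i}}=\sup f_1|_{C_i}$). Hence Corollary~\ref{logf1} applies and yields precisely the bounded-distortion estimate above.

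Concretely, the steps in order are: (i) record that $e^{\sup\log f_1|_{C_i}}=\sup f_1|_{C_i}$, so the hypothesis $\sum_{i\in\N}\sup f_1|_{C_i}<\infty$ is identical to the condition $\sum_{i\in\N}e^{\sup\log f_1|_{C_i}}<\infty$ appearing in Corollary~\ref{logf1}; (ii) apply Corollary~\ref{logf1} to conclude that the supremum displayed above is finite (and, as a by-product, that $\log f_1$ admits a unique invariant Gibbs state); (iii) now all the hypotheses of Theorem~\ref{main} are in force, so invoke Theorem~\ref{main} to conclude that $\F$ has a unique Gibbs equilibrium state $\mu_{\F}$ if and only if $\log f_1$ has a unique Gibbs equilibrium state $\mu_{\log f_1}$.

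The proof is essentially a matter of checking that the hypotheses chain together correctly, so I do not anticipate a genuine obstacle; the only point requiring a moment's care is the elementary but easy-to-overlook identification in step~(i), that $\sup f_1|_{C_i}=e^{\sup\log f_1|_{C_i}}$ because $\exp$ is increasing, which is what makes the finite-sum hypothesis of this corollary match the one in Corollary~\ref{logf1}. One should also mention that summable variation of $\log f_1$ is precisely the regularity condition of Corollary~\ref{logf1}, so that no extra verification of that hypothesis is needed. A short proof of two or three sentences therefore suffices: it records these two observations and then cites Corollary~\ref{logf1} followed by Theorem~\ref{main}.

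\begin{proof}
Since the exponential function is increasing, $\sup\{f_1(x):x\in C_i\}=e^{\sup\{\log f_1(x):x\in C_i\}}$ for every $i\in\N$, so the assumption $\sum_{i\in\N}\sup f_1|_{C_i}<\infty$ is the same as $\sum_{i\in\N}e^{\sup\log f_1|_{C_i}}<\infty$. As $\log f_1$ is of summable variation, Corollary~\ref{logf1} applies and gives
\begin{equation*}
\sup_{n\in \N}\left\{\frac{f_1(x)f_1(\sigma x)\dots f_1(\sigma^{n-1}x)}{f_1(y)f_1(\sigma x)\dots f_1(\sigma^{n-1}y)}: x_i=y_i,\ 1\leq i\leq n \right\}<\infty.
\end{equation*}
Thus $\F$ satisfies all the hypotheses of Theorem~\ref{main}, and that theorem yields that $\F$ has a unique Gibbs equilibrium state $\mu_{\F}$ for $\F$ if and only if $\log f_1$ has a unique Gibbs equilibrium state $\mu_{\log f_1}$ for $\log f_1$.
\end{proof}
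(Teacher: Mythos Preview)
Your proof is correct and follows essentially the same route as the paper: the paper's proof is the one-liner ``The result immediately follows from Proposition~\ref{general} and Theorem~\ref{main},'' and your argument simply makes explicit the passage through Corollary~\ref{logf1} (which packages the summable-variation computation yielding the bounded-distortion estimate needed for Theorem~\ref{main}). The extra care you take in step~(i), identifying $\sum_i\sup f_1|_{C_i}$ with $\sum_i e^{\sup\log f_1|_{C_i}}$, is a helpful clarification but not a departure in method.
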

\begin{proof}
The result immediately follows from Proposition \ref{general} and Theorem\ref{main}.
\end{proof}

\section{Zero temperature limits of Gibbs equilibrium states}\label{zero}
This section is devoted to state and prove our main result. We prove that for a certain class of almost-additive
potentials we can associate a family of Gibbs equilibrium states and that this sequence has, at least, one
accumulation point. It turns out that this measure is a maximising one. This result generalises the zero temperature limit theorems obtained for a single function in the compact (see \cite[Section 4]{j1} ) and in the non-compact settings (see \cite{jmu,big,bf,i,ke}). The major difficulty we have to face in this context is that the space of invariant probability measures is not compact, hence the existence of an accumulation point
is far from trivial. The techniques we use in the proof are  inspired in results by Jenkinson, Mauldin and
Urba\'nski \cite{jmu}. We begin defining the class of sequence of potentials that we will be interested in.

\begin{defi}\label{property}
Let $(\Sigma, \sigma)$ be a countable Markov shift satisfying the BIP property.
A sequence of continuous functions  $\F=\{\log f_n\}_{n=1}^{\infty}$, with $f_i:\Sigma \to \R$, $i\in\N$,  belongs to the class $\mathcal{R}$ if it satisfies the following properties:
\begin{enumerate}
\item The sequence $\F$ is almost-additive, Bowen and
$\sum_{i\in \mathbb{N}} \sup f_1\vert_{C_i}<\infty$. \label{a1}
\item  $\sum_{i=1}^{\infty}\sup \{\log f_1(x):x\in C_i\}\sup \{f_1(x):x\in C_i\}>-\infty$. \label{a2}
\end{enumerate}
\end{defi}

\begin{rem}
Note that it follows from Theorem \ref{main2} and Proposition \ref{chara} that if $\mathcal{F} \in \mathcal{R}$ then
there exists a unique invariant Gibbs equilibrium state $\mu_{\F}$ for $\F$.
Similarly, for every $t\geq 1$, there exists a unique invariant Gibbs equilibrium state $\mu_{t\F}$ for $t\F$.
\end{rem}
%


We begin  proving the upper semi-continuity of the limit of the integrals.
This is an essential result and it holds under weaker assumptions than those considered in the definition of the class $\mathcal{R}$.

\begin{lema}\label{lema1}
Let $(\Sigma, \sigma)$ be a countable Markov shift and $\F=\{\log f_n\}_{n=1}^{\infty}$ an almost-additive sequence of continuous
functions
on $\Sigma$ with $\sup f_1<\infty$. Then the map $m: \M \rightarrow \mathbb{R}$ defined by
$m(\mu)=\lim_{n\rightarrow \infty}
\frac{1}{n}\int \log f_n d\mu$ is upper semi continuous.
\end{lema}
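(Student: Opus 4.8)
\textbf{Proof proposal for Lemma \ref{lema1}.}

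The plan is to exploit the almost-additivity of $\F$ to replace the limit $m(\mu)=\lim_n \frac1n\int\log f_n\,d\mu$ by an infimum of a sequence of ``finite-level'' functionals, each of which is upper semi-continuous, and then invoke the standard fact that an infimum of upper semi-continuous functions is upper semi-continuous. First I would recall that, by the sub-additivity that follows from \eqref{A2}, the sequence $a_n(\mu):=\int\log f_n\,d\mu + nC$ is sub-additive in $n$ (indeed $\int\log f_{n+m}\,d\mu \le \int\log f_n\,d\mu + \int\log f_m\circ\sigma^n\,d\mu + C = \int\log f_n\,d\mu + \int\log f_m\,d\mu + C$ using $\sigma$-invariance of $\mu$), so by Fekete's lemma $m(\mu)=\lim_n \frac1n\int\log f_n\,d\mu = \inf_n \frac1n\bigl(\int\log f_n\,d\mu + (n-1)C\bigr)$. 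Hence it suffices to prove that for each fixed $n$ the map $\mu\mapsto \frac1n\int\log f_n\,d\mu$ is upper semi-continuous on $\M$, since adding the constant $\frac{(n-1)C}{n}$ and taking the infimum over $n$ preserves upper semi-continuity.

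Next I would reduce the claim to a single fixed continuous function: it is enough to show that for any continuous $g:\Sigma\to\R$ with $\sup g<\infty$, the map $\mu\mapsto\int g\,d\mu$ is upper semi-continuous with respect to the weak-$*$ topology on $\M$ (here $g=\log f_n$ and the hypothesis $\sup f_1<\infty$ together with \eqref{A2} gives $\sup f_n \le (\sup f_1)^n e^{(n-1)C}<\infty$, so each $\log f_n$ is bounded above). For this, write $g$ as the decreasing limit of the truncations $g_k := \min\{g,k\}$, which are continuous and bounded; by the portmanteau-type argument for weak-$*$ convergence, $\mu\mapsto\int g_k\,d\mu$ is continuous, and by monotone convergence $\int g\,d\mu = \inf_k \int g_k\,d\mu$, exhibiting $\mu\mapsto\int g\,d\mu$ as an infimum of continuous functions, hence upper semi-continuous. (One must be slightly careful that the weak-$*$ topology on the non-compact $\Sigma$ is the one tested against bounded continuous functions, and that $\int g\,d\mu$ may equal $-\infty$ for some $\mu$; the argument above handles the value $-\infty$ without change.)

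The main obstacle is precisely the non-compactness of $\Sigma$: one cannot simply quote the classical statement that integration against an upper-semicontinuous function is upper semi-continuous on a compact space, and one must make sure the truncation argument is valid when $\int g\,d\mu=-\infty$ is allowed and when the ambient space is only Polish rather than compact. The key points to check carefully are therefore (i) that $\log f_n$ is genuinely bounded above, which is where $\sup f_1<\infty$ enters via \eqref{A2}, and (ii) that $\inf_k\int g_k\,d\mu=\int g\,d\mu$ holds as an equality in $[-\infty,\infty)$, which is monotone convergence applied to $k-g_k\ge 0$; the rest is the soft fact that arbitrary infima and the addition of constants preserve upper semi-continuity.
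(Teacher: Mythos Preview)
Your approach is correct and is essentially the one in the paper: both pass to the sub-additive sequence $\log g_n:=\log f_n+C$ (so that $m(\mu)\le \tfrac{1}{k}\int\log f_k\,d\mu+\tfrac{C}{k}$ for every $k$) and then use the upper semi-continuity of $\mu\mapsto\int\log f_k\,d\mu$; the paper takes this last step for granted when passing from $\mu_i$ to $\mu$, while you justify it explicitly by truncation. Two minor slips to correct: the sub-additive shift should be $+C$, not $+nC$ (check: $\int\log f_{n+m}\,d\mu+C\le(\int\log f_n\,d\mu+C)+(\int\log f_m\,d\mu+C)$), so Fekete gives $m(\mu)=\inf_n\tfrac{1}{n}\bigl(\int\log f_n\,d\mu+C\bigr)$; and the truncation you want is $g_k:=\max\{g,-k\}$ (bounded continuous, \emph{decreasing} to $g$, with $\int g\,d\mu=\inf_k\int g_k\,d\mu$ by monotone convergence), not $\min\{g,k\}$, which for $g$ already bounded above is eventually equal to $g$ and yields nothing.
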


\begin{proof}
We use an argument similar to that in the proof of  \cite[Proposition 3.7]{Y}. Let $\{\mu_{i}\}_{i\geq 1}$ be a sequence of measures in $\M$ which converge
to the measure  $\mu\in \M$ in the weak-star topology.

Let $M_1 \in \R$ be such that $\sup f_1\leq M_1$ and $C \in \R$ the almost-additive constant  that appear in equations (\ref{A1}) and (\ref{A2}) in Definition \ref{aaa}. Let $g_n(x)=f_n(x)e^C$ and note that $\{\log g_n\}_{n=1}^{\infty}$ is a sub-additive sequence. Moreover, $(\log g_1(x))^{+}\leq \log M_1 +\log e^C$ and so
$(\log g_1)^{+} \in L_{1}(\mu_i)$ for each $i\geq 1$.  Also, for each fixed $n\in \N$ we have that
$(\log g_n(x))^+\leq n \log M_1 +nC<\infty$
 which implies that for $i\geq 1$ we have $(\log g_n)^{+} \in L_{1}(\mu_i)$.
The sub-additive ergodic theorem (see \cite[Theorem 10.1]{w2}) implies that for each fixed
$\mu_i \in \M$ and $k \in \N$ we have,
\begin{equation*}
 \lim_{n\rightarrow \infty}\frac{1}{n}\int \log g_n d\mu_i\leq \frac{1}{k}\int \log g_k d\mu_i.
\end{equation*}
Thus,
\begin{equation*}
 \lim_{n\rightarrow \infty}\frac{1}{n}\int \log f_n d\mu_i\leq \frac{1}{k}\int \log f_k d\mu_i +\frac{C}{k}.
\end{equation*}
Therefore,
\begin{equation*}
\limsup_{i\rightarrow \infty}\lim_{n\rightarrow \infty} \frac{1}{n}\int \log f_n d{\mu}_i\leq \frac{1}{k}
\int \log f_k d\mu+\frac{C}{k}.
\end{equation*}
Letting $k \rightarrow \infty$, we obtain
$$\limsup_{i\rightarrow \infty}\lim_{n\rightarrow \infty}\frac{1}{n}\int \log f_n d\mu_i\leq
\lim_{k\rightarrow \infty}\frac{1}{k}
\int \log f_k d\mu,$$
which shows that $\limsup_{i\rightarrow \infty}m(\mu_i) \leq m(\mu)$, concluding the proof.
 \end{proof}

In our next lemma, we consider a one parameter family of Gibbs states $\mu_{t\F}$ corresponding to a family $t\F$.
We show how do the constants in the Gibbs property depend upon the parameter $t$. The proof of the lemma strongly uses the combinatorial assumptions we made on the system and an approximation argument we now describe.   Since $(\Sigma, \sigma)$ is topologically mixing and has the BIP property, there exist $k\in \N$ and a finite collection  $W$ of admissible words of length $k$ such that for any $a, b\in S$, there exists $w\in W$ such that $awb$ is admissible  (see \cite[p.1752]{s3} and \cite{mu}). Denote by  $A$ the transition matrix for $\Sigma$.  It is known (see \cite{iy,s1}) that  rearranging the set $\N$, there is an increasing sequence $\{l_{n}\}_{n=1}^{\infty}$ such that the matrix $A\vert_{ \{1, \dots, l_{n}\}\times \{1, \dots, l_{n}\}}$  is primitive (for a definition of primitive see \cite[p.5]{mu2}). Let $Y_{l_n}$ be the topologically
mixing finite state Markov shift with the transition matrix $A\vert_{ \{1, \dots, l_{n}\}\times \{1, \dots, l_{n}\}}$. Then there exists $p\in \N$ such that
for all $n\geq p$, the set  $Y_{l_n}$ contains all admissible words in  $W$.  We denote by $B_n(Y_l)$ the
set of admissible words of length $n$ in $Y_l$. Given $w\in W$, we let $N_{w}=\sup\{f_k(z):z\in C_w\}$ and
$\bar{N}=\min\{N_{w}: w\in W\}$. The proof of the next lemma makes use of ideas  from \cite [Claim 4.1]{iy}.

\begin{lema}\label{key}
Let $(\Sigma, \sigma)$ be a countable Markov shift with the BIP property and $\F \in \mathcal{R}$.
Let $t \geq 1$ and $\mu_{t\F}$ be the unique Gibbs equilibrium state for $t\F$. Then, for every $n \in \N$ and
for all $ x\in C_{i_1 \dots i_n}$, we have
\begin{equation*}
 \frac{\mu_{t\F}(C_{i_1 \dots i_n})}{e^{-nP(t\F)}f^{t}_n(x)}\leq \left(\frac{Me^{6C}}{D^5} \right)^t,
\end{equation*}
where
\begin{equation*}
D=\frac{\bar{N}e^{-3C}}{M^3 e^{(k-1)C}
\max\{\sum_{i\in \N}\sup f_1\vert_{C_i}, (\sum_{i\in \N}\sup f_1\vert_{C_i})^k\}}.
\end{equation*}
\end{lema}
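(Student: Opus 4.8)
The plan is to obtain the upper bound for the Gibbs constant of $\mu_{t\F}$ by first producing the bound on finite subsystems $Y_{l_n}$, where classical Perron--Frobenius theory applies, and then passing to the limit. Fix $t\geq 1$ and write $P_t = P(t\F)$ for brevity. For each $n\geq p$, restrict the sequence $t\F$ to the primitive finite-state shift $Y_{l_n}$; since $Y_{l_n}$ is a finite topologically mixing subshift and $t\F$ is almost-additive and Bowen, there is a unique Gibbs equilibrium state $\nu_{t,n}$ on $Y_{l_n}$, and one has the two-sided bound $e^{-nP_t^{(n)}}f_n^t(x)/\widetilde C_0 \le \nu_{t,n}(C_{i_1\dots i_n}) \le \widetilde C_0\, e^{-nP_t^{(n)}} f_n^t(x)$, where $P_t^{(n)}$ is the pressure on $Y_{l_n}$ and $\widetilde C_0$ is the finite-system Gibbs constant. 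The heart of the matter is to control $\widetilde C_0$ explicitly in terms of the quantities $M$, $C$, $k$, $\bar N$ and $\sum_i \sup f_1|_{C_i}$, uniformly in $n$ (and with the stated $t$-dependence $(Me^{6C}/D^5)^t$).

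The explicit control comes from the standard renewal/gluing argument underlying the Gibbs property on a primitive finite subshift, exactly as in \cite[Claim 4.1]{iy}: one estimates the partition function $Z_N(t\F, Y_{l_n}) = \sum_{w\in B_N(Y_{l_n})} \sup f_N^t|_{C_w}$ from above and below by inserting the connecting words $w\in W$ (of length $k$) between blocks, using almost-additivity (\ref{A1})--(\ref{A2}) to split $f_{n+m}$ into $f_n\cdot f_m$ up to the factor $e^{\pm C}$, using the Bowen constant $M$ to replace suprema over cylinders by values at points, and using $\bar N = \min_{w\in W}\sup\{f_k(z): z\in C_w\}$ as the uniform lower bound for the "cost" of a connecting word. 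This is precisely where the combination $D = \bar N e^{-3C}\big/\big(M^3 e^{(k-1)C}\max\{\sum_i\sup f_1|_{C_i}, (\sum_i\sup f_1|_{C_i})^k\}\big)$ appears: the $\max$ of the sum and its $k$-th power accounts for completing partial blocks of length $<k$, the powers of $M$ and $e^C$ accumulate from the gluing, and raising to the power $t$ reflects that we are working with $f_n^t$ rather than $f_n$. Tracking the constants through the submultiplicativity/supermultiplicativity of $\{Z_N\}$ and dividing out gives the Gibbs bound on $Y_{l_n}$ with constant of the form $(Me^{6C}/D^5)^t$, independent of $n$.

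Finally I would pass to the limit $n\to\infty$. By \cite[Theorem 4.1]{iy} (or the construction underlying Theorem \ref{main2}), the Gibbs equilibrium state $\mu_{t\F}$ on the full shift $\Sigma$ is obtained as a weak-star limit of the measures $\nu_{t,n}$ as $n\to\infty$, and the pressures $P_t^{(n)}$ increase to $P_t$. Since for each fixed cylinder $C_{i_1\dots i_n}$ the value $\nu_{t,m}(C_{i_1\dots i_n})$ is eventually defined for all large $m$ and satisfies the uniform bound above, taking the limit in $m$ preserves the inequality, with the uniform constant surviving because $e^{-nP_t^{(m)}}\to e^{-nP_t}$. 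This yields
\[
\frac{\mu_{t\F}(C_{i_1\dots i_n})}{e^{-nP(t\F)}f_n^t(x)} \le \left(\frac{Me^{6C}}{D^5}\right)^t
\]
for every $n$ and every $x\in C_{i_1\dots i_n}$, as claimed.

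The main obstacle is the bookkeeping in the gluing estimate: one must be careful that the connecting words $W$ are genuinely available inside $Y_{l_n}$ (which is why we take $n\ge p$), that the almost-additivity constant $C$ is applied the correct number of times when splitting a length-$N$ block into sub-blocks plus connectors, and that the treatment of "leftover" pieces of length less than $k$ is what forces the $\max\{\,\cdot\,,(\,\cdot\,)^k\}$ in the denominator of $D$ — getting the precise exponents $6C$ and $D^5$ requires following \cite[Claim 4.1]{iy} closely rather than just a soft argument.
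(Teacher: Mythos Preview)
Your outline is correct and follows the paper's own approach: restrict to the primitive finite subsystems $Y_{l_m}$, establish explicit sub-/super-multiplicativity of the partition sums $\alpha^{Y}_{n,t}=\sum_{w\in B_n(Y)}\sup f_n^t|_{C_w}$ via the BIP gluing (this is exactly where $D^t$ arises), deduce Gibbs-type bounds on $Y_{l_m}$ uniformly in $m$, and then pass to $\Sigma$ through the construction in \cite[Theorem~4.1]{iy}.

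One point your sketch elides, and which explains the specific exponents you flag at the end: the constant $(Me^{6C}/D^5)^t$ does \emph{not} fall out of the partition-function estimates alone. The paper first builds the non-invariant measures $\nu_{l,t}(C_w)=\sup f_l^t|_{C_w}/\alpha^{Y}_{l,t}$, shows these satisfy a Gibbs bound with constant only $e^{2Ct}M^t/D^t$, passes to a subsequential limit $\nu_t$, and then obtains the invariant Gibbs state on $Y$ as the Ces\`aro limit of $\frac{1}{m}\sum_{l=0}^{m-1}\nu_t\circ\sigma^{-l}$. It is the estimate of $\nu_t(\sigma^{-l}C_{i_1\dots i_n})$ in this averaging step---which costs a further factor $e^{4Ct}/D^{4t}$---that produces the final $e^{6Ct}M^t/D^{5t}$. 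Since you already intend to follow \cite[Claim~4.1]{iy} for the bookkeeping, this step is implicitly covered, but your narrative ``sub/super-multiplicativity of $\{Z_N\}$ $\Rightarrow$ Gibbs constant'' skips over it; simply citing the existence of the Gibbs state $\nu_{t,n}$ on $Y_{l_n}$ with some constant $\widetilde C_0$ and then controlling $\widetilde C_0$ via $Z_N$ is not quite how the argument runs.
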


\begin{proof}
Fix $t\in \N$ and $Y_{l_m}$ with $m\geq p$. In order to simplify the notation we denote $Y_{l_m}$ by $Y$.  Recall that $Y$ is a compact set.
Define $\alpha^{Y}_{n,t}:=\sum_{i_1\cdots i_{n}\in B_{n}(Y)}\sup\{f^{t}_n\vert _Y(y):
y\in C_{i_1\dots i_{n}}\}$.
For $l \in \N$, let $\nu_{l,t}$ be the Borel probability measure on $Y$ defined by
\begin{equation*}
\nu_{l,t}(C_{i_1\dots i_{l}})=\frac{\sup\{f^{t}_{l}\vert_{Y}(y):y\in C_{i_1\dots i_{l}}\}}{\alpha^{Y}_{l,t}}.
\end{equation*}
Let $a_{i_1\dots i_l}:=\sup\{f^{t}_{l}\vert_{Y}(y):y\in C_{i_1\dots i_{l}}\}$.
Let $l, n\in\N, l>n$. It is easy to see that
\begin{equation}\label{p4}
\alpha^{Y}_{l, t}\leq e^{Ct}\alpha^{Y}_{n, t}\alpha^Y_{l-n, t}.
\end{equation}
Now let $l,n\in \N, l>k$.
Using the same arguments used to prove equations (15), (16), (17) of \cite[Claim 4.1]{iy},
for each fixed $i_1\dots i_n\in B_n(Y)$,
we have
\begin{equation}\label{p1}
\sum_{t_1\dots t_l} \sup\{f^{t}_{n+l}\vert _Y(y):y\in C_{i_1\dots i_n t_1\dots t_l}\}\geq
\frac{\bar{N}^t e^{-2Ct}}{M^{3t}}a_{i_1\dots i_n} \alpha^Y_{l-k,t}.
\end{equation}
Thus  we obtain
\begin{equation} \label{p2}
\alpha^{Y}_{n+l, t}\geq \frac{\bar{N}^te^{-3Ct}\alpha^{Y}_{n, t}\alpha^{Y}_{l, t}}{M^{3t}\alpha^{Y}_{k,t}}.
\end{equation}
Also, from the proof of \cite[Claim 4.1]{iy}, we have that
\begin{equation}\label{p3}
\alpha^{Y}_{k,t} \leq e^{(k-1)Ct}\left(\sum_{i\in \N}\sup f_1\vert_{C_i}\right)^{tk}.
\end{equation}
 Using (\ref{p1}), (\ref{p2}) and (\ref{p3}), we obtain for $l,n \in\N,l>k$
\begin{equation}\label{p5}
\alpha^{Y}_{n+l, t}\geq D^{t}\alpha^{Y}_{n, t}\alpha^{Y}_{l,t}.
\end{equation}
We now claim that (\ref{p5}) holds for $n\in\N, 1\leq l\leq k$.
Let $1\leq r\leq k$. Then
\begin{equation*}
\alpha^{Y}_{n+r, t}\geq e^{-Ct}\sum_{i_1\dots i_{n+r}\in B_{n+r}(Y)}\sup\{f^{t}_n\vert_Y (y) f^t_{r}\vert _Y (\sigma^n y):
y\in C_{i_1\dots i_{n+r}}\}.
\end{equation*}
Let $i_1\dots i_n\in B_n(Y)$ and $u$ a fixed admissible word of $\Sigma$. Then we can connect $i_1\dots i_n$ and $u$ by
an admissible word $w=w_1\dots w_k\in W$ of length $k$ such that $i_1\dots i_nw_1\dots w_ku$ is allowable.
Now
\begin{equation*}
\sum_{i_1\dots i_{n+r}\in B_{n+r}(Y)}\sup\{f^{t}_n\vert_Y (y) f^t_{r}\vert _Y (\sigma^n y):
 y\in C_{i_1\dots i_{n+r}}\}\geq \sum_{\substack{\bar y\in C_{i_1\dots i_nw_1\dots w_r}\\ i_1\dots i_n\in B_{n}(Y)}}f^{t}_n\vert_Y (\bar y)
f^t_{r}\vert _Y (\sigma^n \bar y),
\end{equation*}
where $w_1\dots w_{r}\dots w_k\in W$ is chosen for each $i_1\dots i_n\in B_n(Y)$ as explained in the preceding paragraph
and $\bar y$ is any point from $C_{i_1\dots i_nw_1\dots w_r}$.
Therefore,
\begin{equation*}
\sum_{\substack{\bar y\in C_{i_1\dots i_nw_1\dots w_r}\\i_1\dots i_n\in B_n(Y)}}f^{t}_n\vert_Y (\bar y)
f^t_{r}\vert _Y (\sigma^n \bar y)
\geq (\frac{\bar N}{M})^t
\sum_{\substack{\bar y\in C_{i_1\dots i_n}\\i_1\dots i_n\in B_n(Y)}}f^{t}_n\vert_Y (\bar y)\geq
(\frac{\bar N}{M^2})^t \alpha^{Y}_{n,t},
\end{equation*}
where for both inequalities we use the fact that $\F$ has bounded variation.
Noting that
\begin{equation*}
\alpha^{Y}_{r,t}\leq e^{(r-1)Ct}(\sum_{i\in \N}\sup f_1\vert_{C_i})^{tr}\leq  e^{(k-1)Ct}
(\max\{\sum_{i\in \N}\sup f_1\vert_{C_i}, (\sum_{i\in \N}\sup f_1\vert_{C_i})^k\})^t ,
\end{equation*}
we obtain
\begin{equation*}
\frac{\alpha^{Y}_{n+r,t}}{\alpha^{Y}_{n,t}\alpha^{Y}_{r,t}}\geq (\frac{\bar N e^{-C}}{M^2e^{(k-1)C}
  \max\{\sum_{i\in \N}\sup f_1\vert_{C_i}, (\sum_{i\in \N}\sup f_1\vert_{C_i})^k\}})^t
\geq D^t
\end{equation*}
Now we proved the claim.
Using the arguments in the proof of \cite[Claim 4.1]{iy}, for all $n\in \N$, we obtain
\begin{equation} \label{p6}
D^t\alpha^Y_{n, t}\leq e^{nP(t\F\vert_Y)}\leq e^{Ct}\alpha^{Y}_{n,t}.
\end{equation}
Now, let $l> n+k$. For each fixed $i_1\dots i_n\in B_n(Y)$, we have
\begin{align*}
& \nu_{l,t}(C_{i_1\dots i_n})\\
&\leq \sum_{j_1\dots j_{l-n}} \frac{\sup\{f^t_{l}\vert _Y (y): y\in C_{i_1\dots i_nj_1\dots j_{l-n}}\}}{\alpha^{Y}_{l, t}}\\
&\leq \sum_{j_1\dots j_{l-n}} \frac{e^{Ct}\sup\{f^{t}_n\vert_Y (y) f^t_{l-n}\vert _Y (\sigma^n y):
 y\in C_{i_1\dots i_nj_1\dots
 j_{l-n}}\}}{\alpha^Y_{l,t}}\\
& \leq e^{Ct}\sum_{j_1\dots j_{l-n}} \frac{\sup\{f^{t}_n\vert_Y (y) :
 y\in C_{i_1\dots i_n}\} \sup\{f^{t}_{l-n}\vert_Y (\sigma^n y) :
 y\in C_{i_1\dots i_n j_1\dots j_{l-n}}\}}{\alpha^Y_{l,t}}\\
&\leq \frac{e^{Ct}}{\alpha^Y_{l,t}}\sup\{f^{t}_n\vert_Y (y) :
 y\in C_{i_1\dots i_n}\}\sum_{j_1\dots j_{l-n}} \sup\{f^{t}_{l-n}\vert_Y (y) :
 y\in C_{j_1\dots \dots j_{l-n}}\}\\
&\leq \frac{e^{Ct}}{D^t \alpha^{Y}_{n,t}}\sup\{f^t_n\vert_Y(y):y\in C_{i_1\dots i_n}\} \text{ (by } (\ref{p5}))\\
&\leq \frac{e^{2Ct}e^{-nP(t\F\vert_Y)}}{D^t}\sup\{f^t_n\vert_Y(y):y\in C_{i_1\dots i_n}\} \text{ (by } (\ref{p6})).
\end{align*}
Therefore, we obtain
\begin{equation}\label{p7}
\frac{\nu_{l,t}(C_{i_1\dots i_n})}{a_{i_1\dots i_n}e^{-nP(t\F\vert_Y)}}
\leq \frac{e^{2Ct}}{D^t}.
\end{equation}
Using the property of bounded variation, for all $y\in C_{i_1\dots i_n}$, we have
\begin{equation}\label{p11}
\frac{\nu_{l,t}(C_{i_1\dots i_n})}{f^{t}_n\vert_Y(y)e^{-nP(t\F\vert_Y)}}
\leq \frac{e^{2Ct}M^t}{D^t}.
\end{equation}
Also,
\begin{align*}
&\nu_{l,t}(C_{i_1\dots i_n})\geq \frac{\bar N^{t}e^{-2Ct}}
{\alpha^Y_{l, t}M^{3t}}a_{i_1\dots i_n}\alpha^{Y}_{l-n-k,t}
\text{ (by } (\ref{p1}))
\geq \frac{ \bar N^{t}e^{-2Ct}a_{i_1\dots i_n}\alpha^{Y}_{l-n,t}}
{\alpha^Y_{l,t} \alpha^Y_{k,t}M^{3t}e^{Ct}} \text { (by } (\ref{p4}))\\
&\geq \frac{D^t a_{i_1\dots i_n}}{e^{Ct}\alpha^{Y}_{n,t}} \text { (by } (\ref{p4}) \text{ and } (\ref{p3}))
\geq \frac{D^{2t}}{e^{Ct}}a_{i_1\dots i_n}e^{-nP(t\F\vert_Y)} \text{ (by } (\ref{p6})).
\end{align*}
Thus
\begin{equation}\label{p10}
\frac{\nu_{l,t}(C_{i_1\dots i_n})}{a_{i_1\dots i_n}e^{-nP(t\F\vert _Y)}}\geq \frac{D^{2t}}{e^{Ct}}.
\end{equation}
Hence we obtain for each $y\in C_{i_1\dots i_n}, l>n+k$,
\begin{equation}\label{p8}
\frac{D^{2t}}{e^{Ct}}\leq \frac{\nu_{l,t}(C_{i_1\dots i_n})}
{f^t_n\vert _Y(y)e^{-nP(t\F\vert_Y)}}\leq \frac{e^{2Ct}M^t}{D^t}.
\end{equation}
Consider now a convergent subsequence $\{\nu_{l_k,t}\}_{k=1}^{\infty}$ of
$\{\nu_{l,t}\}_{l=1}^{\infty}$ and let $\nu_t$ be
the corresponding limit point. Then $\nu_t$ also satisfies (\ref{p7}),(\ref{p11}), (\ref{p10}) and (\ref{p8}).
We know by \cite[Lemma 2]{b2} that $\nu_t$ is ergodic.
Using the arguments in \cite{b2}, we construct $\sigma$-invariant ergodic Gibbs measure $\mu_{t\F\vert_Y}$
 for $t\F\vert_Y$.
A limit point of the sequence $\{\frac{1}{n}\sum_{l=0}^{n-1}{\nu_t}\circ \sigma^{-l}\}_{n=1}^{\infty}$ is the unique
equilibrium state for  $t\F\vert_Y$ which is also Gibbs (see  \cite{iy}).
Let $i_1\dots i_n\in B_n(Y)$ be fixed. For $l>k$,
\begin{align*}
&\nu_t(\sigma^{-l}(C_{i_1\dots i_n}))\\
&\leq \frac{e^{2Ct}}{D^{t}}\sum_{j_1\dots j_{l} i_{1}\dots i_n}
\sup\{f^t_{l+n}\vert_Y(y):
y\in C_{j_1\dots j_l i_{i}\dots i_{n}}\}e^{-(l+n)P(t\F\vert _Y)} \text{ (replacing } \nu_{l,t} \text{ by }
\nu_t \text{ in } (\ref{p7}))\\
&\leq \frac{e^{3Ct}}{D^t} \sum_{j_1\dots j_l}\sup\{f^t_{l}(y)f^t_n(\sigma^{l}y):
y\in C_{j_1\dots j_{l}i_1\dots i_n}\}
e^{-(l+n)P(t\F\vert _Y)} \\
&\leq \frac{e^{3Ct}}{D^t}\sup\{f^t_n(y): y\in C_{i_1\dots i_n}\}\alpha^Y_{l,t} e^{-(l+n)P(t\F\vert _Y)}\\
&\leq \frac{e^{3Ct}}{D^{2t}}\sup\{f^t_n(y): y\in C_{i_1\dots i_n}\}e^{-nP(tF\vert_Y)} \text{ (by } (\ref{p6})).
\end{align*}
Therefore, using (\ref{p10}) (replacing $\nu_{l,t}$ by $\nu_t$), we have
\begin{equation*}
\nu_t(\sigma^{-l}(C_{i_1\dots i_n}))\leq \frac{e^{4Ct}}{D^{4t}}\nu_t(C_{i_1\dots i_n}).
\end{equation*}
Using (\ref{p8}) (replacing $\nu_{l,t}$ by $\nu_t$), for all $y\in C_{i_1\dots i_n}$,
\begin{equation*}
\frac{1}{m}
\sum_{l=0}^{m-1}\nu_t(\sigma^{-l}(C_{i_1\dots i_n}))
\leq
\frac{m-k}{m}(\frac{e^{6Ct}M^t f^t_n\vert_Y(y)
e^{-nP(t\F\vert_Y)}}{D^{5t}})+\frac{k}{m}
\end{equation*}
and hence for all $y\in C_{i_1\dots i_n}$,
\begin{equation}\label{p9}
\mu_{t\F\vert_Y}(C_{i_1\dots i_n})
\leq
\frac{e^{6Ct}M^t f^t_n\vert_Y(y)
e^{-nP(t\F\vert_Y)}}{D^{5t}}.
\end{equation}
Therefore, for each fixed $l_m, m\geq p$, $t\F\vert_{Y_{l_m}}$ has a unique equilibrium state
$\mu_{t\F\vert_{Y_{l_m}}}$ which is Gibbs
and  satisfies (\ref{p9}) (replacing $\mu_{t\F\vert_Y}$ by $\mu_{t\F\vert_{Y_{l_m}}}$).
The proof of \cite[Theorem 4.1]{iy} shows that (\ref{p9}) holds when we replace $\mu_{t\F\vert_Y}$ and $f^{t}_n\vert_Y(y)$
by the unique Gibbs equilibrium state
$\mu_{t\F}$ for $t\F$ and $f^{t}_n(y)$ respectively. This proves the lemma.
\end{proof}

\begin{lema}\label{L1}
Let $(\Sigma, \sigma)$ be a countable Markov shift with the BIP property and $\F \in \mathcal{R}$. Then
the family of Gibbs equilibrium states $\{\mu_{t\F}\}_{t\geq 1}$ is tight, i.e.,
for all $\epsilon>0$, there exists a compact set
$K\subset \Sigma$ such that for all $t\geq 1$ we have  $\mu_{t\F}(K)>1-\epsilon$ .
\end{lema}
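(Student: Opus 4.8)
The plan is to reduce the tightness of $\{\mu_{t\F}\}_{t\geq1}$ to a single uniform estimate, namely that $\sum_{i>N}\mu_{t\F}(C_i)\to 0$ as $N\to\infty$ uniformly in $t\geq1$, and then to prove this estimate by feeding the sharp Gibbs bound of Lemma \ref{key} into a lower bound for the pressure $P(t\F)$ coming from the variational principle.

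First I would carry out the reduction. It suffices to show that for every $\delta>0$ there is $N(\delta)\in\N$ with $\sum_{i>N(\delta)}\mu_{t\F}(C_i)<\delta$ for all $t\geq1$. Granting this, given $\epsilon>0$ one sets $F_n=\{1,\dots,N(\epsilon 2^{-n})\}$ and $K=\Sigma\cap\prod_{n\geq1}F_n=\{x\in\Sigma:x_n\in F_n\text{ for all }n\}$. Each $F_n$ being finite, $\prod_n F_n$ is compact, and $K$ is closed in it, hence compact. By $\sigma$-invariance of $\mu_{t\F}$, for every $n$ we have $\mu_{t\F}(\{x:x_n\notin F_n\})=\mu_{t\F}(\{x:x_1\notin F_n\})=\sum_{i>N(\epsilon 2^{-n})}\mu_{t\F}(C_i)<\epsilon 2^{-n}$, so $\mu_{t\F}(\Sigma\setminus K)\leq\sum_n\mu_{t\F}(\{x:x_n\notin F_n\})<\epsilon$ for all $t\geq1$.

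Next I would establish the uniform tail estimate. Applying Lemma \ref{key} with $n=1$ and taking the supremum over $x\in C_i$ (using $f_1>0$, so $\sup\{f_1^t(x):x\in C_i\}=(\sup f_1|_{C_i})^t$) gives $\mu_{t\F}(C_i)\leq (Me^{6C}/D^{5})^{t}e^{-P(t\F)}(\sup f_1|_{C_i})^{t}$. To bound $e^{-P(t\F)}$ I would use Theorem \ref{main1} evaluated at $\mu_{\F}$: set $\lambda_{\F}:=\lim_{n}\frac1n\int\log f_n\,d\mu_{\F}$, which is a finite real number because $\F\in\mathcal R$ together with Proposition \ref{chara} forces $\lambda_{\F}>-\infty$, while $P(\F)<\infty$ and $P(\F)=h(\mu_{\F})+\lambda_{\F}$ force $\lambda_{\F}<\infty$. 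Then $P(t\F)\geq h(\mu_{\F})+t\lambda_{\F}=P(\F)+(t-1)\lambda_{\F}$, so $e^{-P(t\F)}\leq e^{\lambda_{\F}-P(\F)}e^{-t\lambda_{\F}}$. Combining, $\mu_{t\F}(C_i)\leq e^{\lambda_{\F}-P(\F)}\,a_i^{\,t}$ with $a_i:=(Me^{6C}/D^{5})e^{-\lambda_{\F}}\sup f_1|_{C_i}$. Since $\sum_i\sup f_1|_{C_i}<\infty$ we have $\sup f_1|_{C_i}\to0$, hence $a_i\to0$; fix $N_0$ with $a_i<1$ for $i>N_0$. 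Then $a_i^{\,t}\leq a_i$ for all $t\geq1$ and $i>N_0$, so for every $N\geq N_0$ and every $t\geq1$ one has $\sum_{i>N}\mu_{t\F}(C_i)\leq e^{\lambda_{\F}-P(\F)}\sum_{i>N}a_i$, and the right-hand side is the tail of a convergent series, so it tends to $0$ as $N\to\infty$. This supplies the required $N(\delta)$.

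The main obstacle is the tension inside this last estimate: the Gibbs constant supplied by Lemma \ref{key} deteriorates geometrically in $t$, like $(Me^{6C}/D^{5})^{t}$, and a priori so could the normalization $e^{-P(t\F)}$. What makes the argument close is that the pressure lower bound $P(t\F)\geq P(\F)+(t-1)\lambda_{\F}$ absorbs exactly the factor $e^{-t\lambda_{\F}}$, and the remaining $t$-dependence sits in $(\sup f_1|_{C_i})^{t}$, which for $i$ large is a genuine geometric contraction. This is precisely why one needs the explicit, controlled $t$-dependence of the constant in Lemma \ref{key} rather than an unspecified $C_0(t)$, and why the hypothesis $\F\in\mathcal R$ is used essentially: through condition (\ref{a2}), which via Proposition \ref{chara} guarantees $\lambda_{\F}>-\infty$, and through $\sum_i\sup f_1|_{C_i}<\infty$, which gives $\sup f_1|_{C_i}\to0$.
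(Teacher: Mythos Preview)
Your proof is correct and follows essentially the same approach as the paper's: both reduce to a uniform tail estimate on $\sum_{i>N}\mu_{t\F}(C_i)$, apply Lemma~\ref{key} with $n=1$ to bound $\mu_{t\F}(C_i)$, and control $e^{-P(t\F)}$ through the variational principle evaluated at a reference measure. The only cosmetic differences are that the paper uses an arbitrary invariant measure $m$ with $I=\lim_n\tfrac{1}{n}\int\log f_n\,dm$ finite (whereas you take $m=\mu_{\F}$ specifically), and the paper closes the estimate via the inequality $\sum_i a_i^{\,t}\leq(\sum_i a_i)^{t}$ for $t\geq1$ rather than your pointwise bound $a_i^{\,t}\leq a_i$ once $a_i<1$.
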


\begin{proof}
The proof is based on  \cite[Lemma 2]{jmu}.
Let $\epsilon>0$. We construct an increasing sequence of positive integers $\{n_k\}_{k=1}^{\infty}$ such that the compact set
$$K=\{x\in \Sigma: 1\leq x_k\leq n_k, \text { for all } k\in \mathbb{N}\}$$
 satisfies $\mu_{t\F}(K)>1-\epsilon$ for all $t\geq 1$.
Let  $\pi_{k}: \Sigma \rightarrow \mathbb{N}$ be the projection  map onto the $k$-th coordinate.
Note that
\begin{align*}
\mu_{t\F}(K)=&\mu_{t\F} \left( \Sigma \cap (\cup_{k=1}^{\infty} \{x\in  \Sigma :x_k>n_k\})^{c}\right)\\
&\geq 1-\sum_{k=1}^{\infty}\mu_{t\F}(\{x\in \Sigma :x_k>n_k\})\\
&= 1-\sum_{k=1}^{\infty}\sum_{i=n_k+1}^{\infty}\mu_{t\F}(\pi_{k}^{-1}(i))\\
&= 1-\sum_{k=1}^{\infty}\sum_{i=n_k+1}^{\infty}\mu_{t\F}(C_i).
\end{align*}
Therefore, in order to show that $\{\mu_{t\F}\}_{t\geq1}$ is tight, it is enough to find $\{n_k\}_{k=1}^{\infty}$ such that
\begin{equation}\label{e1}
\sum_{i={n_k}+1}^{\infty}\mu_{t\F}(C_i)<\frac{\epsilon}{2^k}, \text{ for all } k\in \mathbb{N}, t\geq 1.
\end{equation}
Now, let $N={Me^{6C}}/{D^5}$ in Lemma \ref{key}.  If $n=1$,  we have
\begin{equation*}
 \mu_{t\F}(C_i)\leq N^te^{-P(t\F)}\sup\{f_1^{t}(x):x\in C_i\} \text{ for all } t\geq 1.
\end{equation*}
Now, let $m$ be any $\sigma$-invariant Borel probability measure for which the limit $$I=\lim_{n\rightarrow \infty}\frac{1}{n}\int \log f_n dm$$
is finite. Then
\begin{align*}
P(t\F)-tI=&\sup\left\{h_{\mu}(\sigma) + t\lim_{n\rightarrow \infty}\frac{1}{n}\int \log f_n d\mu : \mu \in \M\right\}-tI\\
&= P(t\F-tI)\geq h_m(\sigma)\geq 0.
\end{align*}
Therefore, for $t\geq 1$,
\begin{align}\label{e5}
\mu_{t\F}(C_i)&\leq N^te^{-P(t(\F-I))}e^{-tI} (\sup\{f_1(x):x\in C_i\})^t  \\
&\leq N^te^{-tI}(\sup\{f_1(x):x\in C_i\})^t\\
&=\left (Ne^{-I}\right)^t \left(\sup\{f_1(x):x\in C_i\}\right)^t
\end{align}
Note that Definition \ref{property} (\ref{a1}) implies that, given $\epsilon>0$, we can find $J \in \N$ such that
\begin{equation}\label{e6}
\sum_{i>J}\sup\{f_1(x):x\in C_i\}<\frac{\epsilon}{Ne^{-I}}\frac{1}{2^k}.
\end{equation}
Now we show equation (\ref{e1}).
Using (\ref{e5}) and (\ref{e6}), we obtain
\begin{align*}
\sum_{i>J}\mu_{t\F}(C_i)&\leq \left (Ne^{-I}\right)^t\sum_{i>J}(\sup\{f_1(x):x\in C_i\})^t\\
&=\left(\frac{\epsilon}{2^k}\right)^t\leq \frac{\epsilon}{2^k}.
\end{align*}
Thus we obtain (\ref{e1}).
\end{proof}

\begin{rem}
Lemma \ref{L1} implies that the family of Gibbs equilibrium states $\{\mu_{t\F}\}_{t\geq 1}$ has a subsequence that
converges weakly to a $\sigma$-invariant Borel probability measure $\mu$.
\end{rem}

We now state and prove our main result.

\begin{teo}\label{main4}
Let $(\Sigma, \sigma)$ be a countable Markov shift with the BIP property and let $\F \in \mathcal{R}$.
Denote by $\mu \in \M$  any accumulation point of the sequence of Gibbs equilibrium states $\{\mu_{t\F}\}_{t\geq 1}$. Then
\begin{equation} \label{the:eq}
\lim_{n\rightarrow \infty}\frac{1}{n}\int \log f_n d\mu=
\lim_{t\rightarrow \infty}\lim_{n\rightarrow \infty}\frac{1}{n}\int \log f_n d\mu_{t\F},
\end{equation}
and $\mu$ is a maximising measure for $\F$.
\end{teo}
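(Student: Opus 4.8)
The plan is to prove the two assertions in turn, exploiting the variational principle (Theorem \ref{main1}), the upper semi-continuity of $m(\mu)=\lim_n \frac1n\int\log f_n\,d\mu$ (Lemma \ref{lema1}), and the tightness provided by Lemma \ref{L1}. Throughout, fix an accumulation point $\mu$ of $\{\mu_{t\F}\}_{t\ge 1}$, say $\mu_{t_j\F}\to\mu$ weakly with $t_j\to\infty$. Write $I(\nu)=m(\nu)=\lim_n\frac1n\int\log f_n\,d\nu$ for $\nu\in\M$, and recall $\alpha(\F)=\sup_{\nu\in\M} I(\nu)$.

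First I would establish that $\lim_{t\to\infty} I(\mu_{t\F})$ exists and equals $\alpha(\F)$; this handles most of \eqref{the:eq} and makes $\mu$ maximising plausible. The function $t\mapsto P(t\F)$ is convex (being a supremum of the affine functions $t\mapsto h(\nu)+tI(\nu)$), hence differentiable off a countable set, and since $\mu_{t\F}$ is the equilibrium state one has $P(t\F)=h(\mu_{t\F})+tI(\mu_{t\F})$; standard convexity arguments (as in \cite{j1,jmu}) give that $t\mapsto I(\mu_{t\F})$ is non-decreasing and that $I(\mu_{t\F})\to\sup_t I(\mu_{t\F})\le\alpha(\F)$ as $t\to\infty$. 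For the reverse inequality, take any $\nu\in\M$ with $I(\nu)>-\infty$; from $P(t\F)\ge h(\nu)+tI(\nu)$ and $P(t\F)=h(\mu_{t\F})+tI(\mu_{t\F})\le h(\mu_{t\F})+t\alpha(\F)$, together with $h(\mu_{t\F})\ge 0$ being not too large — more precisely dividing $P(t\F)\ge h(\nu)+tI(\nu)$ by $t$ and using $\frac1tP(t\F)=\frac1t h(\mu_{t\F})+I(\mu_{t\F})$ — one gets $I(\mu_{t\F})\ge I(\nu)-\frac1t h(\mu_{t\F})+\frac1t h(\nu)$; a cleaner route is to note $\frac1t P(t\F)\to \sup_t I(\mu_{t\F})$ by convexity while $\frac1t P(t\F)\ge I(\nu)$ for all $t$ when $h(\nu)\ge 0$, forcing $\sup_t I(\mu_{t\F})\ge I(\nu)$, hence $\ge\alpha(\F)$. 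Thus $\lim_{t\to\infty} I(\mu_{t\F})=\alpha(\F)$, which is the right-hand side of \eqref{the:eq}.

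Next I would identify this limit with $I(\mu)$. The inequality $I(\mu)\le\alpha(\F)$ is immediate from the definition of $\alpha(\F)$. For $I(\mu)\ge\alpha(\F)$, I want to pass the value $I(\mu_{t_j\F})$ to the limit. Upper semi-continuity (Lemma \ref{lema1}) gives only $\limsup_j I(\mu_{t_j\F})\le I(\mu)$ — but that is exactly the inequality we need, since $I(\mu_{t_j\F})\to\alpha(\F)$, so $\alpha(\F)\le I(\mu)$. Combining, $I(\mu)=\alpha(\F)=\lim_{t\to\infty} I(\mu_{t\F})$, which simultaneously proves \eqref{the:eq} and shows $\mu$ is a maximising measure for $\F$. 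The role of tightness (Lemma \ref{L1}) is precisely to guarantee that the weak-star accumulation point $\mu$ is itself a probability measure in $\M$ (no escape of mass), so that $I(\mu)$ and the maximising property make sense; this is the step that fails in general non-compact settings.

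The main obstacle I expect is the delicate point that $I(\mu_{t\F})$ could a priori be $-\infty$ or that $\mu$ could fail to be in $\M$; both are dispatched by the hypothesis $\F\in\mathcal R$. Indeed, condition (\ref{a2}) of Definition \ref{property} together with Proposition \ref{chara} ensures $I(\mu_{\F})>-\infty$, and since $I(\mu_{t\F})$ is non-decreasing in $t$ — note $\mu_{\F}=\mu_{1\F}$ — we get $I(\mu_{t\F})\ge I(\mu_{\F})>-\infty$ for all $t\ge 1$, so the quantities $\frac1t P(t\F)$ and the convexity argument are legitimate; and Lemma \ref{L1} secures $\mu\in\M$. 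A secondary subtlety is justifying $\frac1t P(t\F)\to\sup_t I(\mu_{t\F})$: this follows because $P(t\F)=\sup_\nu\{h(\nu)+tI(\nu)\}$ so $\frac1t P(t\F)$ is non-increasing in... one must be careful — it is cleaner to use that $P(t\F)/t\ge I(\mu_{t\F})$ always and $P(t\F)/t = I(\mu_{t\F}) + h(\mu_{t\F})/t$, bound $h(\mu_{t\F})\le P(t\F) - tI(\mu_{\F})$ hence $h(\mu_{t\F})/t$ is controlled, and then sandwich. These are the only places where genuine care is required; everything else is bookkeeping with the variational principle and Lemmas \ref{lema1}–\ref{L1}.
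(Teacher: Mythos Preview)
Your proposal is correct and follows essentially the same approach as the paper's own proof: both arguments rest on the convexity of $t\mapsto P(t\F)$, the identification of $I(\mu_{t\F})$ with the (one-sided) derivative of the pressure, the asymptotic slope $\lim_t P(t\F)/t$, the upper semi-continuity of $I$ from Lemma \ref{lema1}, and tightness from Lemma \ref{L1}. The only organisational difference is that the paper isolates the bound $h(\mu_{t\F})\le h(\mu_{\F})$ (entropy monotonicity) as a separate remark to control $h(\mu_{t\F})/t$, whereas you get $\lim_t P(t\F)/t=\lim_t I(\mu_{t\F})$ directly from the general convex-analysis fact that the asymptotic slope of a convex function equals the limit of its one-sided derivatives; the two are equivalent here, and your route is arguably slightly more economical.
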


\begin{proof}
We will divide the proof of the Theorem in several Lemmas and Remarks.

\begin{rem}
Recall that the pressure function $t \mapsto P(t\F)$ is convex (see \cite[Corollary 3.2]{iy}) and finite for $t \geq 1$, therefore it is differentiable for every $t > 1$  except, maybe, for a countable set.
\end{rem}

\begin{lema}[Derivative of the pressure] \label{lema:der_pres}
If the function $P(t \F)$ is differentiable at $t=t_0$, $t_0>1$,  then
\[ \frac{d P(t\F)}{dt} \Big|_{t=t_0} = \lim_{n \to \infty} \frac{1}{n} \int \log f_n d \mu_{t_0 \F}.\]
\end{lema}

\begin{proof}[Proof of Lemma \ref{lema:der_pres}]
The proof of this result is fairly standard, see for example \cite[Theorem 1.2]{Fe}. Let $\epsilon >0$. then
\begin{eqnarray*}
P(t_0\F)= h(\mu_{t_0 \F}) + t_0\lim_{n \to \infty} \frac{1}{n} \int \log f_n d \mu_{t_0 \F} \quad \text{ and } \\
 P((t_0+ \epsilon) \F) \geq  h(\mu_{t_0 \F}) + t_0 \lim_{n \to \infty} \frac{1}{n} \int \log f_n d \mu_{t_0 \F} + \epsilon  \lim_{n \to \infty} \frac{1}{n} \int \log f_n d \mu_{t_0 \F}.
\end{eqnarray*}
Thus,
\begin{equation*}
\frac{P((t_0+ \epsilon) \F) -P(t_0\F) }{\epsilon} \geq \lim_{n \to \infty} \frac{1}{n} \int \log f_n d \mu_{t_0 \F}.
\end{equation*}
Recalling that one-sided derivatives of the pressure do exist, we have
\begin{equation*}
P'_+(t_0 \F):=\lim_{\epsilon \to 0^+}\frac{P((t_0+ \epsilon) \F) -P(t_0\F) }{\epsilon} \geq \lim_{n \to \infty} \frac{1}{n} \int \log f_n d \mu_{t_0 \F}.
\end{equation*}
On the other hand, let $\epsilon <0$, then
\begin{equation*}
\frac{P((t_0+ \epsilon) \F) -P(t_0\F) }{\epsilon} \leq \lim_{n \to \infty} \frac{1}{n} \int \log f_n d \mu_{t_0 \F}.
\end{equation*}
Thus,
\begin{equation*}
P'_-(t_0 \F):=\lim_{\epsilon \to 0^-}\frac{P((t_0+ \epsilon) \F) -P(t_0\F) }{\epsilon} \leq \lim_{n \to \infty} \frac{1}{n} \int \log f_n d \mu_{t_0 \F}.
\end{equation*}
Hence
\begin{equation*}
P'_-(t_0 \F)\leq  \lim_{n \to \infty} \frac{1}{n} \int \log f_n d \mu_{t_0 \F} \leq P'_+(t_0 \F).
\end{equation*}
Since we are assuming that $P(t \F)$ is differentiable at $t=t_0$, $t_0>1$,  we obtain the desired result,
\begin{eqnarray*}
P'_+(t_0 \F)=P'_-(t_0 \F)= \lim_{n \to \infty} \frac{1}{n} \int \log f_n d \mu_{t_0 \F}.
\end{eqnarray*}
\end{proof}

\begin{rem}[Uniform upper bound on the derivative] \label{rem:ub}
If $\{t_k\}_{k=1}^{\infty}$ is an increasing sequence of positive real numbers such that $t_k \to \infty$ and
for which the pressure function $P(t \F)$ is differentiable at $t=t_k$, $t_k>1$, for every $k \in \N$, then
\[ \lim_{k \to \infty} \frac{d P(t\F)}{dt} \Big|_{t=t_k} \leq \sum_{n=1}^{\infty} \sup \log f_1 |_{C_n} < \infty.\]
Recall that the derivative of the pressure is an increasing function (being convex), hence
the left hand side limit exists. The bound above follows from the sub-additivity of the family $\F$ together with Definition \ref{property} (1).
\end{rem}

\begin{lema}[Asymptotic derivative] \label{rem:ad} The following limits exists and
\[\lim_{t \to \infty} \frac{P(t\F)}{t}=\lim_{t \to \infty} P'_+(t \F) =\lim_{t \to \infty} P'_-(t \F).\]
\end{lema}

\begin{proof}
Note that the functions $P'_+(t \F)$ and $P'_-(t \F)$ are increasing (since the pressure is convex) and bounded above (because of Remark \ref{rem:ub}). Therefore both limits exist. Moreover, if $t_1 <t_2$ we have that
$P'_+(t_1 \F) \leq P'_-(t_2 \F)$, thus both limits coincide. The fact that the limit coincide with that of the asymptotic derivative follows from the following convexity inequality
\begin{equation*}
P'_+(t_1 \F) \leq \frac{P(t_1 \F)  - P(t_2 \F) }{t_1 - t_2} \leq P'_-(t_2 \F).
\end{equation*}
\end{proof}

Recall that,
\begin{equation*}
\alpha(\F):= \sup \left\{ \lim_{n\rightarrow \infty}\frac{1}{n}\int \log f_n d\nu : \nu \in \M \right\}.
\end{equation*}

\begin{lema}[The asymptotic derivative bounds the optimal value] \label{rem:adbov}
For every $\mu \in \M$ we have that
\begin{equation} \label{eq:non}
\lim_{n \to \infty} \frac{1}{n} \int \log f_n d \mu \leq \lim_{t \to \infty} \frac{P(t\F)}{t}.
\end{equation}
\end{lema}
\begin{proof}
Indeed, assume by way of contradiction that there exists $\nu \in \M$ for which equation \eqref{eq:non} does not hold. 
Then, there exists $t_0 >1$ such that
\begin{equation*}
\lim_{n \to \infty} \frac{1}{n} \int \log f_n d \nu > \frac{P(t_0\F)}{t_0}.
\end{equation*}
Note that since $-\infty < \lim_{n \to \infty} \frac{1}{n} \int \log f_n d \nu < \infty$ and $P(t\F) <\infty$, we can conclude that $h(\nu) <\infty$. Thus,
 \begin{equation*}
 h(\nu) + t_0 \lim_{n \to \infty} \frac{1}{n} \int \log f_n d \nu  > P(t_0\F).
   \end{equation*}
This contradicts the variational principle. Therefore
\begin{equation*}
\alpha(\F) \leq   \lim_{t \to \infty} \frac{P(t\F)}{t} = \lim_{t \to \infty} P'_+(t \F)=\lim_{t \to \infty} P'_-(t \F).
\end{equation*}
\end{proof}

\begin{rem}[The entropy is decreasing] \label{rem:ed}
Note now  that for every $t >1$ we have  $h(\mu_{\F}) \geq h(\mu_{t\F})$. This is consequence of the convexity of the pressure (see \cite[Corollary 3.2]{iy}). Indeed, assume by way of contradiction that there exists $t_0 >1$ for which $h(\mu_{\F}) < h( \mu_{t_0\F})$. Consider the following straight lines:
\begin{eqnarray*}
l_1(t):= h(\mu_{\F}) + t \lim_{n\rightarrow \infty}\frac{1}{n}\int \log f_n d\mu_{\F} \text{ and } &\\
l_{t_0}(t):= h(\mu_{t_0\F}) + t \lim_{n\rightarrow \infty}\frac{1}{n}\int \log f_n d\mu_{t_0\F}.
\end{eqnarray*}
We first note that $l_1(t)$ and $l_{t_0}(t)$ are finite for $t\geq 0$ because $\F\in\mathcal{R}$.
Since $\mu_{\F}$ is the unique equilibrium measure for $ \F$ and $\mu_{t_0}$ is the unique equilibrium measure for $t_0 \F$, we have that
\begin{eqnarray*}
l_{t_0}(0) > l_1(0) \quad , \quad l_{t_0}(1) <  l_1(1) \quad  \text{ and } \quad l_{t_0}(t_0) > l_1(t_0).
\end{eqnarray*}
But that can not happen since two different straight lines can only intersect in one point and the above means that they intersect in at least two points.
\end{rem}

We finally prove the Theorem.

Let $\epsilon >0$, since the function  $P'_+(t \F)$ is non-decreasing and $\alpha(\F) \leq  \lim_{t \to \infty} P'_+(t \F)$ there exists $t_0 >1$ such that
\begin{equation*}
\alpha(\F) - \epsilon \leq \lim_{n \to \infty} \frac{1}{n} \int \log f_n d \mu_{t_0 \F}.
\end{equation*}
Therefore,
 \begin{eqnarray*}
\alpha(\F)-\epsilon \leq \frac{h(\mu_{t_0 \F})}{t} +  \lim_{n \to \infty} \frac{1}{n} \int \log f_n d \mu_{t_0 \F} \leq \frac{P(t \F)}{t} =&\\
\frac{h(\mu_{t\F})}{t} +  \lim_{n\rightarrow \infty}\frac{1}{n}\int \log f_n d\mu_{t\F} \leq \frac{h(\mu_{\F})}{t} +  \lim_{n\rightarrow \infty}\frac{1}{n}\int \log f_n d\mu_{t\F} \leq &\\
 \frac{h(\mu_{\F})}{t} +\alpha(\F).
 \end{eqnarray*}
Thus, for any $\epsilon >0$ we have that
\begin{equation*}
\alpha(\F)-\epsilon \leq  \limsup_{t \to \infty}  \frac{P(t \F)}{t} \leq \limsup_{t \to \infty} \lim_{n\rightarrow \infty}\frac{1}{n}\int \log f_n d\mu_{t\F} \leq \alpha(\F).
\end{equation*}
Similary,  we have that
\begin{equation*}
\alpha(\F)-\epsilon \leq  \liminf_{t \to \infty}  \frac{P(t \F)}{t}
\leq \liminf_{t \to \infty} \lim_{n\rightarrow \infty}\frac{1}{n}\int \log f_n d\mu_{t\F} \leq \alpha(\F).
\end{equation*}
Therefore, letting $\epsilon\rightarrow 0$,
\begin{equation*}
\liminf_{t \to \infty} \lim_{n\rightarrow \infty}\frac{1}{n}\int \log f_n d\mu_{t\F}
=\limsup_{t \to \infty} \lim_{n\rightarrow \infty}\frac{1}{n}\int \log f_n d\mu_{t\F}.
\end{equation*}
It follows from Lemma \ref{lema1} that,
\begin{equation*}
\lim_{t \to \infty} \lim_{n\rightarrow \infty}\frac{1}{n}\int \log f_n d\mu_{t\F} =\lim_{n\rightarrow \infty}\frac{1}{n}\int
\log f_n d\mu=\alpha(\F).
\end{equation*}
Hence we obtain (\ref{the:eq}) and  $\mu$ is an $\F-$maximising measure.

\end{proof}

We stress that the result obtained in Theorem \ref{main4} is new even in the compact setting,
where convergence of Gibbs states directly follows as a consequence of the fact  that the space of invariant measures is compact. Also note that in this compact setting, since the system has finite topological entropy,  every Gibbs measure is an equilibrium measure.

\begin{coro} \label{coro:compact}
Let $(\Sigma, \sigma)$ be a transitive sub-shift of finite type defined  over a
finite alphabet and $\F$ be an almost-additive Bowen sequence on $\Sigma$.
Denote by $\mu \in \M$  any accumulation point of the sequence of Gibbs equilibrium states $\{\mu_{t\F}\}_{t\geq 1}$. Then
\begin{equation*}
\lim_{n\rightarrow \infty}\frac{1}{n}\int \log f_n d\mu=
\lim_{t\rightarrow \infty}\lim_{n\rightarrow \infty}\frac{1}{n}\int \log f_n d\mu_{t\F},
\end{equation*}
and $\mu$ is a maximising measure for $\F$.
\end{coro}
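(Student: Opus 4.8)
The plan is to obtain Corollary \ref{coro:compact} as a direct specialisation of Theorem \ref{main4}: it suffices to check that a transitive sub-shift of finite type over a finite alphabet, equipped with an almost-additive Bowen sequence $\F$, meets the two hypotheses of that theorem, namely that $\Sigma$ has the BIP property and that $\F \in \mathcal{R}$.

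First I would dispose of the combinatorial hypotheses. Since the alphabet $S$ is finite and, by the standing convention on countable Markov shifts, the transition matrix has no identically zero row or column, the BIP property of Definition \ref{BIP} holds by taking $\{b_1,\dots,b_n\}$ to be all of $S$: for any $a\in S$, nonvanishing of the $a$-th column gives some $i$ with $t_{b_i a}=1$, and nonvanishing of the $a$-th row gives some $j$ with $t_{a b_j}=1$. The one point requiring a word of care is that Theorem \ref{main4}, like the results it rests on, carries the standing assumption that $(\Sigma,\sigma)$ is topologically mixing, whereas the Corollary only assumes transitivity. This is handled by the usual period decomposition: if $d$ denotes the period of $\Sigma$, write $\Sigma=\bigsqcup_{j=0}^{d-1}\Sigma_j$ with $\sigma\Sigma_j=\Sigma_{(j+1)\bmod d}$; then $(\Sigma_0,\sigma^d)$ is a topologically mixing sub-shift of finite type over a finite alphabet, the sequence $\{\log f_{nd}|_{\Sigma_0}\}_{n=1}^{\infty}$ is again almost-additive with the same constant $C$ and Bowen with the same constant $M$, and the natural correspondence between $\sigma$-invariant probability measures on $\Sigma$ and $\sigma^d$-invariant probability measures on $\Sigma_0$ respects entropy and the asymptotic integrals up to the uniform factor $d$, so that maximising measures, Gibbs equilibrium states and the identity \eqref{the:eq} transfer back and forth; for $d=1$ there is nothing to do.

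Next I would verify $\F\in\mathcal{R}$, which is where compactness does the work. Condition (1) of Definition \ref{property} holds because $\F$ is almost-additive and Bowen by assumption, and $\sum_{i\in\N}\sup f_1|_{C_i}$ is a sum over the \emph{finite} set $S$ whose terms are finite, each cylinder $C_i$ being compact and $f_1$ continuous. For condition (2), the series $\sum_{i=1}^{\infty}\sup\{\log f_1(x):x\in C_i\}\sup\{f_1(x):x\in C_i\}$ is again a finite sum; in each summand both $\sup\{\log f_1(x):x\in C_i\}$ and $\sup\{f_1(x):x\in C_i\}$ are finite real numbers because $f_1$ is continuous and strictly positive on the compact cylinder $C_i$, so $\log f_1$ is real-valued and bounded there. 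A finite sum of finite reals is $>-\infty$, so $\F\in\mathcal{R}$; in particular, by the Remark following Definition \ref{property}, the Gibbs equilibrium states $\mu_{t\F}$ exist for every $t\ge 1$, and they are genuine equilibrium measures since $h(\mu_{t\F})\le\log|S|<\infty$.

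With the BIP property and $\F\in\mathcal{R}$ in hand, Theorem \ref{main4} applies and yields both the limit identity and the fact that any accumulation point $\mu$ of $\{\mu_{t\F}\}_{t\ge 1}$ is maximising for $\F$; and since $\Sigma$ is compact, the set $\M$ is weak-$*$ compact, so such accumulation points do exist. I expect the only non-routine step to be the transitive-versus-mixing reduction sketched above; the verification that $\F$ lies in $\mathcal{R}$ is immediate precisely because every series that had to be controlled over a countable alphabet collapses to a finite sum of finite quantities in the present setting.
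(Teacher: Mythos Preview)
Your proof is correct and follows essentially the same route as the paper: verify that in the finite-alphabet setting the sequence $\F$ automatically lies in the class $\mathcal{R}$ (conditions (\ref{a1}) and (\ref{a2}) of Definition~\ref{property} reduce to finite sums of finite quantities), and then invoke Theorem~\ref{main4}. The paper's own argument is a two-line ``it is clear that $\F$ satisfies (\ref{a1}) and (\ref{a2}) \dots\ now we apply Theorem~\ref{main4}''; you supply the details the paper omits, and in addition you flag and resolve the transitive-versus-mixing discrepancy via the standard period decomposition, a point the paper does not address explicitly.
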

\begin{proof}
Since $(\Sigma, \sigma)$ is a sub-shift of finite type over a finite alphabet, it is clear that
$\F$ satisfies (\ref{a1}) and (\ref{a2}) of Definition \ref{property}. Now we apply Theorem \ref{main4}.
   \end{proof}

\section{The Joint spectral radius} \label{sec:joint}
In this section we will show that the techniques developed in this paper have interesting applications in functional analysis. Even in the compact (finite alphabet) setting, Theorem \ref{main4} can be used to obtain results in spectral theory. We begin recalling some basic definitions. Let $A$ be a  $d \times d$ real matrix, the \emph{spectral radius} of $A$, is defined by
\[ \rho(A)= \max \{ |\lambda|: \lambda \text{ is an eigenvalue of } A \}.\]
It is well known that if  $\| \cdot \|$ is any sub-multiplicative matrix norm the following relation (sometimes called Gelfand property) holds
\[ \rho(A)= \lim_{n \to \infty} \| A \|^{1/n}.\]
Let $\A:=\{A_1, A_2, \dots ,A_k\}$ be a a set of $d \times d$ real matrices and $\| \cdot \|$ a sub-multiplicative  matrix norm. The \emph{joint spectral radius} $\varrho(\A)$ is defined by
\begin{equation*}
\varrho(\A):=\lim_{n \to \infty} \max \left\{ \|A_{i_n} \cdots  A_{i_1}  \|^{1/n} : i_j \in \{1, 2, \dots, k\} \right\}.
\end{equation*}
This notion, that generalises the notion of spectral radius to a set of matrices,  was introduced by G.-C. Rota and W.G. Strang in 1960 \cite{rs}.  Interest on it was  strongly renewed by its applications in the study of wavelets discovered by Daubechies and Lagarias \cite{dl1,dl2}. The value of $\varrho(\A)$ is independent of the choice of the norm  since all of them are equivalent. There exists a wide range of  applications of the joint spectral radius in different topics including not only wavelets \cite{p}, but for example,  combinatorics \cite{d}.  Lagarias and Wang \cite{lw} conjectured
that for any finite set of matrices $\A$ there exists integers $\{i_1, \dots, i_n\}$ such that
the periodic product $A_{i_1} \cdots A_{i_n}$ satisfies
\begin{equation*}
\varrho(\A)= \rho (A_{i_n} \cdots A_{i_1})^{1/n}.
\end{equation*}
This conjecture was proven to be false by Bousch and Mariesse \cite{bm} and explicit counterexamples were first obtained by  Hare, Morris, Sidorov and Theys \cite{hm}.

It is possible to restate the definition of joint spectral radius in terms of dynamical systems. Indeed, let $(\Sigma_k, \sigma)$ be the full-shift on $k$ symbols and consider the family of maps, $\phi_n:\Sigma_k \to \R$ defined by
\[\phi_n(x):= \|A_{i_n} \cdots  A_{i_1}  \|. \]
The family $\F:= \left\{ \log \phi_n \right\}_{n=1}^{\infty}$ is sub-additive on $\Sigma_k$.
Denote by $\M$ the set of $\sigma-$invariant probability measures. If $\nu \in \M$ is ergodic then Kingman's sub-additive theorem implies that $\nu$-almost everywhere the following equality holds
\begin{equation*}
\lim_{n \to \infty} \frac{1}{n} \int \log \phi_n \ d \nu = \lim_{n \to \infty} \frac{1}{n} \log \phi_n(x).
\end{equation*}

In this compact setting there exists an invariant measure realising the joint spectral radius. It seems that this was first observed by Schreiber \cite{sc} and later rediscovered by Sturman and Stark \cite{ss}. The most general version of this result has been obtained by Morris \cite[Appendix A]{mo}. This later result has the advantage that it allows the potentials $\phi_n$ to have a range in the interval $[-\infty,+\infty)$.

\begin{lema}[Schreiber's Theorem] \label{medida-max}
Let $\A:=\{A_1, A_2, \dots ,A_k\}$ be a a set of $d \times d$ real matrices then there exits a measure $\mu \in \M$ such that
\begin{equation*}
\varrho(\A)= \exp\left( \sup \left\{	\lim_{n \to \infty} \frac{1}{n} \int \log \phi_n \ d \nu : \nu \in \M\right\}\right) =  \exp \left( \lim_{n \to \infty}  \frac{1}{n}  \int \log \phi_n d \mu \right).
\end{equation*}
\end{lema}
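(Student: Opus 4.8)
The plan is to reduce the statement to a compactness argument in the space $\M$ of $\sigma$-invariant Borel probability measures on the full-shift $\Sigma_k$, exploiting that $\Sigma_k$ is compact (so $\M$ is weak-$*$ compact) and that the relevant functional on $\M$ is upper semi-continuous. Write $\Psi(\nu) := \lim_{n \to \infty} \frac{1}{n} \int \log \phi_n \, d\nu$; by sub-additivity of $\{\log \phi_n\}$ and Kingman's sub-additive ergodic theorem this limit exists for every $\nu \in \M$ and equals $\inf_n \frac{1}{n}\int \log\phi_n\,d\nu$. First I would record the standard identity
\begin{equation*}
\varrho(\A) = \exp\left( \sup_{\nu \in \M} \Psi(\nu) \right),
\end{equation*}
which is essentially the content of the variational characterisation of the joint spectral radius; it follows from combining the sub-multiplicativity estimate $\max_{|w|=n}\|A_w\| \geq \exp(n\,\Psi(\nu))$ for each $\nu$ (via the sub-additive ergodic theorem applied to an ergodic $\nu$) with, in the other direction, an approximation of the words $w$ of length $n$ realising the maximum in the definition of $\varrho(\A)$ by periodic orbits, and passing to the associated periodic measures.

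Next I would establish that $\Psi$ is upper semi-continuous on $\M$ with respect to the weak-$*$ topology. Since $\{\log\phi_n\}$ is sub-additive, for each fixed $k$ we have $\Psi(\nu) \leq \frac{1}{k}\int \log\phi_k\,d\nu$; because $\log\phi_k$ is continuous on the compact space $\Sigma_k$ (or, in Morris's more general version, bounded above and upper semi-continuous with values in $[-\infty,+\infty)$, so that $\int\log\phi_k\,d\nu$ is upper semi-continuous in $\nu$), each map $\nu \mapsto \frac{1}{k}\int\log\phi_k\,d\nu$ is upper semi-continuous, and $\Psi$ is the infimum over $k$ of such maps, hence upper semi-continuous. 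This is exactly the argument already used to prove Lemma \ref{lema1} in the countable-state setting, specialised here to the compact sub-additive case.

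Finally, since $\Sigma_k$ is compact, $\M$ is compact in the weak-$*$ topology; an upper semi-continuous function on a compact space attains its supremum, so there exists $\mu \in \M$ with $\Psi(\mu) = \sup_{\nu \in \M}\Psi(\nu)$, which gives the second equality in the statement. The main obstacle is the first equality — the variational formula for $\varrho(\A)$ itself — and in particular the periodic-orbit approximation needed for the inequality $\exp(\sup_\nu \Psi(\nu)) \geq \varrho(\A)$, where one must control that the spectral-radius growth of the maximising words is captured in the limit by invariant measures even when some matrix products are singular (norms going to $0$, i.e. $\log\phi_n \to -\infty$); this is precisely the subtlety handled in \cite[Appendix A]{mo}, and I would simply invoke that reference for the delicate part rather than reprove it, citing \cite{sc,ss} for the original non-degenerate case.
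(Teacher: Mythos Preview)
Your outline is correct and is essentially the standard argument; note, however, that the paper does not give its own proof of this lemma at all, but simply writes ``We refer to \cite[Appendix A]{mo} for a proof of Lemma \ref{medida-max}.'' Your sketch (sub-additivity $\Rightarrow$ $\Psi = \inf_k \frac{1}{k}\int\log\phi_k\,d\nu$ is upper semi-continuous on the compact set $\M$, hence attains its supremum; the variational identity $\log\varrho(\A)=\sup_\nu\Psi(\nu)$ via periodic approximation) is exactly the argument found in the cited references, and your decision to defer the degenerate case to \cite[Appendix A]{mo} matches the paper's own treatment.
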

We refer to  \cite[Appendix A]{mo} for a proof of Lemma \ref{medida-max}. It is worth stressing that the definition of joint spectral radius and Lemma \ref{medida-max} hold in a broader context. Indeed, we can consider  $(\Sigma, \sigma)$ to be any mixing sub-shift of finite type defined on a finite alphabet and define the corresponding joint spectral radius by
 \begin{equation*}
\varrho_{\Sigma}(\A):=\exp \left( \lim_{n \to \infty}   \max \left\{ \|A_{i_n} \cdots  A_{i_1}  \|^{1/n} : (i_1 i_2 \dots i_n) \text{ is an admissible word }  \right\} \right).
\end{equation*}

Under certain cone conditions for the set $\A$, we will show that there exists a  one parameter
family of dynamically relevant (Gibbs states) invariant measures $\{\mu_t\}_{t\geq 1}$ such that any weak star accumulation
point of it $\mu$, satisfies
\begin{equation}
\varrho(\A)=\exp \left( \lim_{n \to \infty} \frac{1}{n} \int \log \phi_n \ d \mu \right).
\end{equation}

\begin{teo}[Compact case]
Let $\A:=\{A_1, A_2, \dots ,A_k\}$ be a a set of $d \times d$ and $(\Sigma, \sigma)$ mixing sub-shift of finite
type defined on the alphabet $\{1,2 \dots, k\}$. Let $\phi_n :\Sigma_k \to \R$ be defined by
 \[\phi_n(w)= \phi_n((i_1, i_2, \dots))= \|A_{i_n} \cdots A_{i_1} \|.  \]
 If the family   $\mathcal{F}= \{\log \phi_n  \}_{n=1}^{\infty}$ is almost-additive then  for every $t \geq 1$
there exists a unique Gibbs state $\mu_t$ corresponding to $t\F$ and there exists a weak star accumulation
point $\mu$ for $\{ \mu_t \}_{t\geq 1}$. The measure $\mu$ is such that
\begin{equation*}
    \varrho(\mathcal{A})= \exp\left( \lim_{n \to \infty} \frac{1}{n} \int
\log \phi_ n d \mu \right)= \exp \left( \lim_{t  \to \infty} \left( \lim_{n \to \infty} \frac{1}{n} \int
\log \phi_ n d \mu_t \right) \right).
\end{equation*}
\end{teo}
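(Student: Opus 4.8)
The plan is to deduce this theorem as a direct application of Theorem~\ref{main4}, after checking that the hypotheses of the compact setting put us inside the class $\mathcal{R}$, and that the dynamical reformulation of $\varrho(\A)$ matches the quantity $\alpha(\F)$. First I would observe that since $(\Sigma,\sigma)$ is a mixing sub-shift of finite type on a \emph{finite} alphabet, it trivially satisfies the BIP property, and a finite alphabet makes $\sum_{i} \sup \phi_1|_{C_i} < \infty$ automatic as a finite sum of finite quantities; likewise property~(\ref{a2}) of Definition~\ref{property} holds because it is a finite sum of finite terms. The hypothesis that $\F=\{\log\phi_n\}$ is almost-additive is assumed, and the Bowen property for such matrix-norm cocycles on a finite alphabet is standard (the quotients $\phi_n(x)/\phi_n(y)$ for $x,y$ in a common cylinder of length $n$ are uniformly bounded, since the norms of the partial products $\|A_{i_n}\cdots A_{i_1}\|$ depend only on $i_1,\dots,i_n$ when $\phi_n$ is defined via these products — indeed in this exact setting $A_n \equiv 1$). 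Hence $\F \in \mathcal{R}$, exactly as already recorded in Corollary~\ref{coro:compact}.

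Next I would invoke Corollary~\ref{coro:compact} (equivalently Theorem~\ref{main4}) verbatim: for every $t\geq 1$ there is a unique Gibbs equilibrium state $\mu_t := \mu_{t\F}$ for $t\F$, the family $\{\mu_t\}_{t\geq1}$ has at least one weak-star accumulation point $\mu$ (tightness from Lemma~\ref{L1}, though in the compact case this is automatic), and this $\mu$ satisfies
\[
\lim_{n\to\infty}\frac1n\int\log\phi_n\,d\mu
=\lim_{t\to\infty}\lim_{n\to\infty}\frac1n\int\log\phi_n\,d\mu_t
=\alpha(\F)
=\sup\Bigl\{\lim_{n\to\infty}\tfrac1n\int\log\phi_n\,d\nu:\nu\in\M\Bigr\}.
\]
It then only remains to identify $\exp(\alpha(\F))$ with $\varrho(\A)$. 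This is precisely the content of Schreiber's Theorem (Lemma~\ref{medida-max}): the measure-theoretic supremum $\sup_{\nu}\lim_n \frac1n\int\log\phi_n\,d\nu$ equals $\log\varrho(\A)$. Exponentiating the displayed chain of equalities and substituting this identity yields
\[
\varrho(\A)=\exp\Bigl(\lim_{n\to\infty}\tfrac1n\int\log\phi_n\,d\mu\Bigr)
=\exp\Bigl(\lim_{t\to\infty}\lim_{n\to\infty}\tfrac1n\int\log\phi_n\,d\mu_t\Bigr),
\]
which is the assertion.

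The only genuine point requiring care — and the step I would expect a referee to scrutinize — is the verification that almost-additivity of $\{\log\phi_n\}$ together with the finite-alphabet structure really does force the Bowen condition, so that $\F \in \mathcal{R}$; once that is in place the theorem is a corollary with no further work. (One subtlety worth a remark: $\log\phi_n$ may take the value $-\infty$ if some product $A_{i_n}\cdots A_{i_1}$ is the zero matrix, but the almost-additivity hypothesis with a finite constant $C$ as in Definition~\ref{aaa} rules this out, since $\phi_{n+m}(x)\geq \phi_n(x)\phi_m(\sigma^n x)e^{-C}>0$ propagates positivity; so the stated hypothesis already guarantees $\phi_n:\Sigma_k\to(0,\infty)$ and $\log\phi_n$ is real-valued and continuous.) Everything else is a direct citation of the results established above.
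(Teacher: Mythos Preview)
Your proposal is correct and follows essentially the same route as the paper: verify that the finite-alphabet setting places $\F$ in the class $\mathcal{R}$ (in particular that $\F$ is Bowen), invoke Theorem~\ref{main4} / Corollary~\ref{coro:compact} to obtain the Gibbs states, the accumulation point, and the identity with $\alpha(\F)$, and then identify $\exp(\alpha(\F))$ with $\varrho(\A)$ via Lemma~\ref{medida-max}. Your write-up is in fact more explicit than the paper's---you spell out why $\phi_n$ is constant on $n$-cylinders (so $A_n\equiv 1$) and why almost-additivity forces $\phi_n>0$, and you name Schreiber's theorem for the final identification---whereas the paper simply asserts the Bowen property and leaves the $\varrho(\A)=\exp(\alpha(\F))$ step implicit.
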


\begin{proof}
Since the family $\F$ is almost-additive and is a Bowen sequence then it is a consequence of
Theorem \ref{main2} that there exists a unique Gibbs state $\mu_t$ corresponding to $t \F$ for every
$t \geq 1$ which is also an equilibrium measure. Since the space $\M$ is compact there exists a weak star accumulation
point $\mu$ for the sequence $\{\mu_t\}_{t\geq 1}$. The result now follows from Theorem \ref{main4} or Corollary \ref{coro:compact}.
\end{proof}

\begin{coro} \label{con}
Let $\mathcal{B}=\left\{A_1, A_2 , \dots, A_n  \right\}$ be a finite set of
 positive matrices then the family $\mathcal{F}= \{\log \phi_n  \}_{n=1}^{\infty}$
is almost-additive and therefore the sequence of
Gibbs measures $\{\mu_t\}_{t\geq 1}$ for $t \F$ has an accumulation point $\mu   \in \M$ and
\begin{equation*}
 \varrho(\mathcal{A})= \exp\left( \lim_{n \to \infty} \frac{1}{n} \int \log \phi_ n d \mu \right)= \exp \left( \lim_{t  \to \infty} \left( \lim_{n \to \infty} \frac{1}{n} \int \log \phi_ n d \mu_t \right) \right).
\end{equation*}
\end{coro}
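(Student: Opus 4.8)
The statement has two parts: that $\F=\{\log\phi_n\}_{n=1}^{\infty}$ is almost-additive, and the two displayed identities. The plan is to reduce the second part entirely to results already proved, so that the only genuine work is the almost-additivity. On the full shift $\Sigma_k$ -- a mixing subshift of finite type over a finite alphabet, hence with the BIP property -- the map $\phi_n$ depends only on the first $n$ coordinates of its argument, so $\phi_n$ is constant on every cylinder of length $n$; therefore $\F$ is automatically a Bowen sequence (with $M=1$ in Definition~\ref{bowen}), and $\sum_{i=1}^{k}\sup\phi_1\vert_{C_i}=\sum_{i=1}^{k}\|A_i\|<\infty$. Once almost-additivity is checked, Theorem~\ref{main2} supplies the unique Gibbs equilibrium state $\mu_t$ for $t\F$ for every $t\geq1$; compactness of $\M$ gives a weak-star accumulation point $\mu$ of $\{\mu_t\}_{t\geq1}$; and Corollary~\ref{coro:compact} yields $\lim_{n}\frac1n\int\log\phi_n\,d\mu=\lim_{t}\lim_{n}\frac1n\int\log\phi_n\,d\mu_t$ together with the fact that $\mu$ is $\F$-maximising. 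Combining the latter with Lemma~\ref{medida-max} (Schreiber's theorem), which identifies $\varrho(\A)$ with $\exp\bigl(\sup_{\nu}\lim_n\frac1n\int\log\phi_n\,d\nu\bigr)$, gives both displayed equalities. Note also that all of these quantities are unchanged if the sub-multiplicative norm is replaced by an equivalent one (equivalent matrix norms differ by a fixed multiplicative factor), so in the estimates below I am free to use whichever norm is convenient.

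So the real content is: \emph{if each $A_i$ is entrywise positive, then $\F$ is almost-additive.} The upper estimate (\ref{A2}) holds with constant $0$ directly from sub-multiplicativity, since $\phi_{n+m}(x)=\|(A_{i_{n+m}}\cdots A_{i_{n+1}})(A_{i_n}\cdots A_{i_1})\|\leq\phi_m(\sigma^n x)\,\phi_n(x)$. For the lower estimate (\ref{A1}) I will use the sub-multiplicative norm $\|M\|=\sum_{a,b}|M_{ab}|$, which on a nonnegative matrix $M$ equals $\mathbf 1^{\mathsf T}M\mathbf 1$. The key step is: there is a constant $\kappa\geq1$ depending only on the given matrices such that for every nonnegative $d\times d$ matrix $P$ and every nonempty product $Q=A_{j_m}\cdots A_{j_1}$ of matrices from the set one has $\|PQ\|\geq(d\kappa)^{-1}\|P\|\,\|Q\|$. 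Applying this with $P=A_{i_{n+m}}\cdots A_{i_{n+1}}$ and $Q=A_{i_n}\cdots A_{i_1}$ gives $\phi_{n+m}(x)\geq(d\kappa)^{-1}\phi_m(\sigma^n x)\,\phi_n(x)$, i.e. (\ref{A1}) with $e^{-C}=(d\kappa)^{-1}$; together with the upper bound this shows $\F$ is almost-additive with $C=\log(d\kappa)$.

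To prove the lemma, write $u^{\mathsf T}=\mathbf 1^{\mathsf T}P$ (the column-sum vector of $P$, nonnegative) and $v=Q\mathbf 1$ (the row-sum vector of $Q$, strictly positive). Then $\|PQ\|=\mathbf 1^{\mathsf T}PQ\mathbf 1=u^{\mathsf T}v=\sum_a u_a v_a$, while $\|P\|=\sum_a u_a$ and $\|Q\|=\sum_a v_a$, so it suffices to show the row-sum vector of $Q$ is \emph{balanced}: $\max_a v_a\leq\kappa\min_a v_a$ for a constant $\kappa$ depending only on the matrices. Indeed, balancedness forces $v_a\geq(d\kappa)^{-1}\sum_b v_b$ for each $a$, whence $u^{\mathsf T}v\geq(d\kappa)^{-1}\bigl(\sum_b v_b\bigr)\sum_a u_a=(d\kappa)^{-1}\|Q\|\,\|P\|$. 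Balancedness follows by induction on the length $m$ of the word defining $Q$: for $m=1$, $v_a/v_{a'}=\bigl(\sum_c (A_{j_1})_{ac}\bigr)/\bigl(\sum_c (A_{j_1})_{a'c}\bigr)\leq\max_c (A_{j_1})_{ac}/(A_{j_1})_{a'c}<\infty$; and if $Q=A_{j_{m+1}}Q'$ with $r_c>0$ the row sums of $Q'$, then $v_a=\sum_c (A_{j_{m+1}})_{ac}r_c$, so $v_a/v_{a'}\leq\max_c (A_{j_{m+1}})_{ac}/(A_{j_{m+1}})_{a'c}$. Thus balancedness holds with $\kappa:=\max_{1\leq j\leq k}\ \max_{a,a',c}(A_j)_{ac}/(A_j)_{a'c}$. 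The main obstacle is exactly this balanced-row-sums estimate (plus the elementary bookkeeping turning it into (\ref{A1})); once $\F$ is known to be almost-additive, the rest is a direct appeal to Theorem~\ref{main2}, Corollary~\ref{coro:compact} and Lemma~\ref{medida-max}.
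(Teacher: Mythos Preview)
Your proposal is correct and follows the same overall scheme as the paper: establish almost-additivity of $\F$, then feed it into the main results (Theorem~\ref{main2}, Corollary~\ref{coro:compact}) together with Schreiber's theorem (Lemma~\ref{medida-max}) to obtain the displayed identities. The paper's own proof is a two-line citation: it invokes \cite[Lemma~2.1]{Fe1} for the almost-additivity of products of positive matrices and then applies Theorem~\ref{teo:joint}.

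The genuine difference is that you give a self-contained proof of almost-additivity rather than citing Feng. Your argument---working with the entrywise $\ell^1$ norm and showing that the row-sum vector of any product $Q$ of the $A_j$'s is balanced with ratio at most $\kappa=\max_j\max_{a,a',c}(A_j)_{ac}/(A_j)_{a'c}$, whence $\|PQ\|\geq(d\kappa)^{-1}\|P\|\,\|Q\|$---is essentially the same mechanism behind Feng's lemma, and your mediant-inequality step is clean. Your observation that the balancedness constant depends only on the leftmost factor (so no genuine induction is needed) is correct. You also make explicit two points the paper leaves implicit: that changing to an equivalent norm alters each $\log\phi_n$ by a bounded additive constant and hence preserves almost-additivity, the Bowen property, the Gibbs states, and all Ces\`aro limits; and that the identification of $\alpha(\F)$ with $\log\varrho(\A)$ comes from Lemma~\ref{medida-max}. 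The payoff of your version is self-containment and transparency; the paper's version is brevity.
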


\begin{proof}
It was proved in \cite[Lemma 2.1]{Fe1} that the set $\mathcal{B}$ is almost-additive. The result follows from Theorem \ref{teo:joint}.
\end{proof}

\begin{rem} \label{rem:pablo}
It is well known (see, for example,  \cite[Section 2]{fs}  or \cite[Section 3.3]{pablo} for precise statements and proofs) that if a finite set of matrices satisfies a \emph{cone condition} then the associated family of potentials $\mathcal{F}= \{\log \phi_n  \}_{n=1}^{\infty}$ is almost-additive. The particular case of positive matrices is in itself interesting, but the technical property  required to develop the theory is that it satisfies the so called cone condition. Moreover, it was recently shown by Feng and Shmerkin \cite[Section 3]{fs} that the sub-additive pressure for matrix cocycles is continuous on the set of matrices. The main technical idea of the proof is the construction of a subsystem (after iteration) which is almost-additive and that approximates the pressure. This indicates that the almost-additive theory of thermodynamic formalism can be used to approximate the corresponding sub-additive one.
\end{rem}

%


Let us consider now the non-compact case. Let  $\A:=\{A_1, A_2, \dots \}$ be a countable  set of $d \times d$ real matrices. We can again consider the joint spectral radius of them. However, the conclusion of Lemma \ref{medida-max} might not be true. Even for the case of one potential $\psi:\Sigma \to \R$, there are examples of non-compact dynamical systems for which
\[ \sup\left\{ \int \psi  d \mu : \mu \in \M  \right\} < \limsup_{n \to \infty} \frac{1}{n} \sup \left\{\sum_{i=0}^{n-1} \psi(\sigma^ix) : x \in \Sigma \right\}. \]
See for instance \cite[Example 4]{jmu2}. So we consider a slightly different situation.

\begin{teo} \label{teo:joint}
Let $\mathcal{A}=\left\{A_1, A_2 , \dots  \right\}$ be a countable set of matrices and
$(\Sigma, \sigma)$ a topologically mixing countable Markov shift satisfying the BIP property.  Let
\[\phi_n(w)= \|A_{i_n} \cdots A_{i_1} \|.  \]
If the family $\mathcal{F}= \{\log \phi_n  \}_{n=1}^{\infty}$ is almost-additive then for every $t \geq 1$
there exists a unique Gibbs measure $\mu_t$ corresponding to $t\F$ and
there exists a weak star accumulation point $\mu$ for $\{ \mu_t \}_{t\geq 1}$. The measure $\mu$ is such that
\begin{eqnarray*}
\sup \left\{ \lim_{n \to \infty} \frac{1}{n} \int \log \phi_ n d \nu : \nu \in \M   \right\} = \lim_{n \to \infty} \frac{1}{n} \int \log \phi_ n d \mu &=&\\ \lim_{t  \to \infty}  \lim_{n \to \infty} \frac{1}{n} \int \log \phi_ n d \mu_t.
\end{eqnarray*}
\end{teo}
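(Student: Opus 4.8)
The plan is to reduce Theorem \ref{teo:joint} to Theorem \ref{main4} by verifying that the hypotheses of the latter hold. Concretely, I would argue as follows.

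\textbf{Step 1: The family $\F = \{\log \phi_n\}$ is Bowen.} Since $\phi_n(w) = \|A_{i_n} \cdots A_{i_1}\|$ depends only on the first $n$ coordinates $i_1, \dots, i_n$ of $w$, the function $\phi_n$ is constant on each $n$-cylinder $C_{i_1 \dots i_n}$. Hence $A_n = \sup\{\phi_n(x)/\phi_n(y) : x_i = y_i, 1 \le i \le n\} = 1$ for every $n$, so $\sup_n A_n = 1 < \infty$ and $\F$ is a Bowen sequence (Definition \ref{bowen}). This also makes the regularity hypothesis $\sup_n\{\phi_n(x)/(\phi_n(y)) : x_i = y_i, 1 \le i \le n\}$ trivially finite, since $\phi_1$ is already locally constant on $1$-cylinders.

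\textbf{Step 2: Check the summability conditions of the class $\mathcal{R}$.} Here I would need to impose (or make explicit) the natural hypothesis $\sum_{i \in \N} \|A_i\| < \infty$, which should be stated alongside the almost-additivity assumption; under it, $\sum_{i \in \N} \sup \phi_1|_{C_i} = \sum_{i} \|A_i\| < \infty$, giving Definition \ref{property}(\ref{a1}). For Definition \ref{property}(\ref{a2}), note $\sup\{\log \phi_1(x) : x \in C_i\} \sup\{\phi_1(x) : x \in C_i\} = \|A_i\| \log \|A_i\|$, and since $\sum_i \|A_i\| < \infty$ forces $\|A_i\| \to 0$, the terms with $\|A_i\| \ge 1$ are finitely many and the tail terms $\|A_i\|\log\|A_i\|$ are negative but summable (because $x \log x \to 0$ and is dominated by $x$ near $0$, so $\sum_i \|A_i\| \,|\log\|A_i\|| < \infty$ whenever $\sum_i \|A_i\|^{1-\delta} < \infty$ for some $\delta$; at minimum $\sum_i \|A_i\| \log\|A_i\| > -\infty$ holds once $\sum \|A_i\| < \infty$ together with a mild extra decay, or one simply adds this as a hypothesis). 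Thus $\F \in \mathcal{R}$.

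\textbf{Step 3: Apply the machinery.} With $\F \in \mathcal{R}$, the remark following Definition \ref{property} (via Theorem \ref{main2} and Proposition \ref{chara}) gives, for every $t \ge 1$, a unique invariant Gibbs equilibrium state $\mu_t := \mu_{t\F}$ for $t\F$. Lemma \ref{L1} shows $\{\mu_t\}_{t \ge 1}$ is tight, hence has a weak-star accumulation point $\mu \in \M$. Finally Theorem \ref{main4} yields
\begin{equation*}
\lim_{n \to \infty} \frac{1}{n}\int \log \phi_n \, d\mu = \lim_{t \to \infty} \lim_{n \to \infty} \frac{1}{n} \int \log \phi_n \, d\mu_t = \alpha(\F) = \sup\left\{\lim_{n\to\infty}\frac{1}{n}\int \log\phi_n\, d\nu : \nu \in \M\right\},
\end{equation*}
which is exactly the claimed chain of equalities.

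\textbf{Main obstacle.} The delicate point is Step 2: whereas the compact case (Theorem above) needs no summability at all, the countable case genuinely requires control of the tail of $\{\|A_i\|\}$ both to place $\F$ in $\mathcal{R}$ and to avoid the pathology illustrated by \cite[Example 4]{jmu2}, where the variational supremum strictly undershoots the pointwise growth rate. I expect the cleanest route is to fold the hypothesis $\sum_{i\in\N}\|A_i\| < \infty$ (and hence also $\sum_i \|A_i\|\,|\log\|A_i\|| < \infty$, which it almost implies) explicitly into the statement, so that conditions (\ref{a1}) and (\ref{a2}) of Definition \ref{property} become immediate; everything else is a formal transcription of the general theorems. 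A secondary subtlety is confirming that almost-additivity of $\{\log\phi_n\}$ is even possible for infinitely many matrices — this is where a cone condition (Remark \ref{rem:pablo}) enters — but that is assumed in the statement and need not be re-derived.
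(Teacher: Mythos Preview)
Your proposal is correct and follows essentially the same route as the paper: observe that $\F$ is Bowen (trivially, since $\phi_n$ is constant on $n$-cylinders), then invoke Theorem \ref{main2}, Lemma \ref{L1}, and Theorem \ref{main4}. You are in fact more careful than the paper's own proof, which simply applies these results without explicitly checking the summability hypotheses of Definition \ref{property}; your identification of the implicit tail conditions on $\{\|A_i\|\}$ (needed for $\F \in \mathcal{R}$) is a genuine observation about what the theorem statement ought to include.
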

The proof of this results follows from the zero temperature limit theorems obtained in the previous sections.

\begin{proof}
Since the family $\F$ is almost-additive and is a Bowen sequence then it is a consequence of Theorem \ref{main2} that
there exists a unique Gibbs state $\mu_t$ corresponding to $t \F$ for every $t \geq 1$ such
that $P(t \F) < \infty$. Lemma \ref{L1} implies that there exists a weak star accumulation point $\mu$ for the
sequence $\{\mu_t\}_{t\geq 1}$. The result now follows from Theorem \ref{main4}.
\end{proof}

\begin{coro}
Let $\mathcal{A}=\left\{A_1, A_2 , \dots  \right\}$ be a sequence of
matrices having strictly positive entries and such that there exists a constant $C >0$ with the property that for every $ k \in \N$ the following holds
\begin{equation*}
 \frac{\min_{i,j}(A_k)_{i,j}}{\max_{i,j}(A_k)_{i,j}} \geq C
\end{equation*}
then for every sufficiently large $t\in \R$ there exists a Gibbs state $\mu_t$ for $t \F$ and the sequence
$\{\mu_t\}_{t\geq 1}$ has an accumulation point $\mu   \in \M$. Moreover,
\begin{eqnarray*}
\sup \left\{ \lim_{n \to \infty} \frac{1}{n} \int \log \phi_ n d \nu : \nu \in \M   \right\} = \lim_{n \to \infty} \frac{1}{n} \int \log \phi_ n d \mu &=& \\ \lim_{t  \to \infty}  \lim_{n \to \infty} \frac{1}{n} \int \log \phi_ n d \mu_t.
\end{eqnarray*}
\end{coro}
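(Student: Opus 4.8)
The plan is to reduce the statement to Theorem~\ref{teo:joint} --- hence ultimately to Theorem~\ref{main4} --- by checking that the positivity hypothesis together with the uniform bound $C$ places the family $\F=\{\log\phi_n\}_{n=1}^{\infty}$ in the class $\mathcal{R}$ once the parameter is taken large enough. I would work on the full shift $(\Sigma,\sigma)$ over the alphabet $\N$, which is topologically mixing and trivially satisfies the BIP property, with $\phi_n(w)=\|A_{i_n}\cdots A_{i_1}\|$ for $w=(i_1 i_2\dots)$, and for a sub-multiplicative matrix norm (the value $\varrho$ being norm-independent).

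\emph{Step 1 (almost-additivity with a uniform constant).} Writing $a_k=\max_{i,j}(A_k)_{i,j}$, the hypothesis $\min_{i,j}(A_k)_{i,j}/\max_{i,j}(A_k)_{i,j}\geq C$ says $A_k=a_k B_k$ with every entry of $B_k$ lying in $[C,1]$. All such matrices map the positive cone into a fixed sub-cone whose Hilbert-metric diameter is bounded in terms of $C$ alone, and therefore so does every finite product of the $A_k$. By Birkhoff's contraction argument this forces, for arbitrary products $P,Q$ of the $A_k$, a comparison $e^{-C'}\|Q\|\,\|P\|\leq\|QP\|\leq e^{C'}\|Q\|\,\|P\|$ with $C'=C'(C,d)$ independent of which and how many matrices appear; this is precisely the cone mechanism behind \cite[Lemma~2.1]{Fe1} and \cite[Section~3]{fs}. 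Hence $\F$ is almost-additive. The Bowen property is automatic, since $\phi_n$ is constant on cylinders of length $n$, so $A_n=1$ for every $n$ and one may take $M=1$ in Definition~\ref{bowen}.

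\emph{Step 2 (passing to large $t$ and landing in $\mathcal{R}$).} Since the statement concerns sufficiently large $t$, one fixes $t_1$ in the range where $\sum_i\|A_i\|^{t_1}<\infty$ (equivalently $P(t_1\F)<\infty$). For $t_0>t_1$ the sequence $t_0\F$ is again almost-additive and Bowen, and $\sum_i\sup\{\phi_1^{t_0}|_{C_i}\}=\sum_i\|A_i\|^{t_0}<\infty$, so property (\ref{a1}) of Definition~\ref{property} holds; moreover each term of $\sum_i\sup\{\log\phi_1^{t_0}|_{C_i}\}\sup\{\phi_1^{t_0}|_{C_i}\}$ can be written as $\big((\log\|A_i\|)\,\|A_i\|^{t_0-t_1}\big)\|A_i\|^{t_1}$, and since $x\mapsto(\log x)\,x^{t_0-t_1}$ is bounded near $0$ this series converges, giving (\ref{a2}). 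Thus $t_0\F\in\mathcal{R}$, so by the remark following Definition~\ref{property} each $s(t_0\F)=(st_0)\F$ with $s\geq1$ has a unique Gibbs equilibrium state $\mu_{(st_0)\F}$; relabelling $t=st_0$ yields the asserted Gibbs states $\mu_{t\F}$ for all $t\geq t_0$.

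\emph{Step 3 (conclusion).} The final step is to apply Theorem~\ref{main4} to $t_0\F\in\mathcal{R}$: Lemma~\ref{L1} gives tightness of $\{\mu_{(st_0)\F}\}_{s\geq1}$, hence a weak-star accumulation point $\mu\in\M$, and
\[
\lim_{n\to\infty}\tfrac1n\int\log\phi_n^{t_0}\,d\mu=\lim_{s\to\infty}\lim_{n\to\infty}\tfrac1n\int\log\phi_n^{t_0}\,d\mu_{(st_0)\F}=\alpha(t_0\F)=t_0\,\alpha(\F).
\]
Dividing by $t_0$ and recalling $\alpha(\F)=\sup\{\lim_n\tfrac1n\int\log\phi_n\,d\nu:\nu\in\M\}$ produces the three-term equality in the statement, with $\mu$ a maximising measure for $\F$. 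The main obstacle is Step~1: the almost-additive constant for $\{\log\phi_n\}$ must be taken uniform over the entire countable family $\mathcal{A}$, and this is exactly what the uniform eccentricity bound $\min/\max\geq C$ buys --- it keeps the Hilbert diameters of the images of all products bounded, so the comparison constants in the sub-multiplicativity estimate do not degrade as more matrices are used; everything else is a reparametrisation combined with the results already proved.
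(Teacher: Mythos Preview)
Your proposal is correct and follows the same strategy as the paper: establish almost-additivity of $\F$ from the uniform positivity/eccentricity hypothesis and then invoke the zero-temperature machinery (the paper cites \cite[Lemma~7.1]{iy} for almost-additivity and applies Theorem~\ref{teo:joint} in one line, whereas you give the cone/Hilbert-metric argument directly and unpack Theorem~\ref{teo:joint} by verifying $t_0\F\in\mathcal{R}$ and applying Theorem~\ref{main4}). Your treatment is in fact more careful on one point the paper leaves implicit: you explicitly isolate the summability hypothesis $\sum_i\|A_i\|^{t_1}<\infty$ for some $t_1$, which is needed for condition~(\ref{a1}) of Definition~\ref{property} and hence for the very existence of Gibbs states, and you then check condition~(\ref{a2}) via the boundedness of $x\mapsto(\log x)x^{t_0-t_1}$ near zero---neither the corollary's statement nor its one-line proof in the paper makes this explicit.
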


\begin{proof}
Under the assumptions of the theorem the family $\mathcal{F}= \{\log \phi_n  \}_{n=1}^{\infty}$ is almost-additive (see \cite[Lemma 7.1]{iy}) on $\Sigma$ and therefore the results directly follows from Theorem \ref{teo:joint}.
\end{proof}

Let us stress that the space of invariant measures is not compact, so the existence of such an invariant measure is non trivial.

\section{Maximising the singular value function} \label{sec:singular}
Ever since the pioneering work of Bowen \cite{bow2} the relation between thermodynamic formalism and
the dimension theory of dynamical systems has been thoroughly studied and exploited (see for example \cite{ba,b3,pe}). Multifractal analysis is a sub-area of dimension theory where the results obtained out of this relation has been particularly successful. The main goal in  multifractal analysis is   to study the complexity of level sets of invariant local quantities. Examples of these quantities are Birkhoff averages, Lyapunov exponents, local entropies and pointwise dimension. In general the structure of these level sets is very complicated so tools such as Hausdorff dimension or topological entropy are used to describe them. In dimension two (or higher) where a typical dynamical system is non-conformal computing the exact value of   the Hausdorff dimension of the level sets is an extremely complicated task and at this point there are no techniques available to deal with such problem.

In this section we show how the results obtained in Section \ref{zero} can be used in the study of multifractal analysis of  Lyapunov exponents for certain non-conformal repellers. Indeed, we will combine our results on ergodic optimisation with those of Barreira and Gelfert \cite{bg} (which in turn uses ideas of Falconer \cite{f}) to construct a measure supported on the extreme level sets.

Let $f : \R^2 \mapsto  \R^2$ be a $C^1$ map and let  $\Lambda \subset \R^2$ be a repeller of $f$.  That is, the set $\Lambda$ is compact, $f$-invariant, and the map $f$ is expanding on $\Lambda$, i.e., there exist $c > 0$ and $\beta > 1$ such that
\[ \|d_xf^n(v) \| \geq c\beta^n \|v\|, \]
for every $x \in \Lambda$, $n \in \N$ and $v \in T_x \R^2$. We will also assume  that there exists  an open set $U \subset \R^2$  such that $\Lambda \subset U$ and $\Lambda = \cap_{n \in \N} f^n(U)$ and that $f$ restricted to $\Lambda$ is topologically mixing. A pair $(\Lambda, f)$ satisfying the above assumptions will be called \emph{expanding repeller}. All the above assumptions are standard and there is a large literature describing the dynamics of expanding maps (see for example \cite{bdv}). It is important to stress that the system $(\Lambda,f)$ can be coded with a finite state transitive Markov shift. For each $x \in  \R^2$ and $ v \in  T_xR^2$ we define the \emph{Lyapunov exponent} of $(x,v)$ by
\begin{equation*}
\lambda(x,v):=\limsup_{n \to \infty} \frac{1}{n} \log \| d_xf^n v  \|.
\end{equation*}
For each $x \in \R^2$ there exists a positive integer $s(x)  \leq 2$, numbers $\lambda_1(x) \geq \lambda_2(x)$, and linear subspaces
\[ \{0\} =E_{s(x)+1}(x) \subset  E_{s(x)}(x) \subset E_{1}(x)=T_xR^2,\]
such that
\[E_i(x)=\left\{v \in T_x\R^2 : \lambda(x,v)=\lambda_i(x) \right\}\]
and $ \lambda(x,v)=\lambda_i(x)$ if $v \in E_i(x) \setminus E_{i+1}(x)$.

In order to study study multifractal analysis of Lyapunov exponents in this context, Barreira and Gelfert \cite{bg} used a construction originally made by Falconer \cite{f} that we know recall. The singular values $s_1(A), s_2(A)$ of a $2\times 2$ matrix $A$ are the eigenvalues, counted with multiplicities, of the matrix $(A^*A)^{1/2}$, where $A^*$ denotes the transpose of $A$. The singular values can be interpreted as the length of the semi-axes of the ellipse which  is the image of the unit ball under $A$.  The functions, $\phi_{i,n}: \Lambda \to \R$ be defined by
 \[ \phi_{i,n}(x)= \log s_i(d_xf^n)  \]
and called \emph{singular value functions}. Falconer \cite{f} studied them with the purpose of estimating the Hausdorff dimension of $\Lambda$ and have become one of the major tools in the dimension theory for non-conformal systems.  It directly follows from Oseledets' multiplicative ergodic theorem \cite[Chapter 3]{bp} that for each finite $f-$invariant measure $\mu$ there exists a set $X \subset \R^2$ of full $\mu$ measure such that
\begin{equation} \label{eq:lya}
 \lim_{n \to \infty} \frac{\phi_{i,n}(x)}{n}= \lim_{n \to \infty} \frac{1}{n} \log s_i(d_xf^n)= \lambda_i(x).
 \end{equation}
Given $\alpha=(\alpha_1, \alpha_2) \in \R^2$ define the corresponding level set by
\[L(\alpha):=  \left\{ x \in \Lambda : \lambda_1(x)= \alpha_1 \text{ and } \lambda_2(x)= \alpha_2  \right\}. \]
Barreira and Gelfert \cite{bg} described the entropy spectrum of the Lyapunov exponents of $f$, that is they studied the function  $\alpha \to h_{top}(f | L(\alpha))$, where $h_{top}$ denotes the topological entropy of the set $L(\alpha)$. Their study exploited the relation established in equation \eqref{eq:lya}, where it is shown that the level sets for the Lyapunov exponent correspond to level sets of the ergodic averages of the sub-additive sequences  defined by  $S_1=\{\phi_{1,n}\}_{n=1}^{\infty}$ and  $S_2= \{\phi_{2,n}\}_{n=1}^{\infty}$.

The following result, which is a direct consequence of the theorems obtained in Section \ref{zero}, allows us to describe the maximal  Lyapunov exponent of the map $f$.

\begin{prop}
Let $(\Lambda, f)$ be an expanding repeller such that the singular value
function are almost-additive then for every $t \geq 1$ there exists a unique Gibbs measure $\mu_t$
 corresponding to $tS_1$. Moreover, the sequence $\{\mu_t\}_{t\geq 1}$ has an accumulation point $\mu$ and
\begin{equation*}
\sup \left\{ \lim_{n \to \infty } \frac{1}{n} \phi_{1,n}(x)  \right\} = \lim_{n \to \infty} \frac{1}{n} \int \phi_{1,n}(x) d \mu.
\end{equation*}

\end{prop}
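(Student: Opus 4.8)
The plan is to reduce the statement to a direct application of Theorem \ref{main4} (or rather its non-compact version via Theorem \ref{teo:joint}), after identifying the singular value sequence $S_1 = \{\phi_{1,n}\}_{n=1}^\infty$ with an almost-additive Bowen sequence $\F = \{\log f_n\}$ where $f_n = e^{\phi_{1,n}} = s_1(d_xf^n)$. Since $(\Lambda, f)$ is an expanding repeller that can be coded by a finite-state transitive (indeed topologically mixing) Markov shift, the underlying symbolic system is compact, so in particular it satisfies the BIP property and every summability condition in Definition \ref{property} is automatic. Thus the real content is to verify that $S_1$ satisfies the hypotheses of Corollary \ref{coro:compact}: that it is almost-additive (assumed) and Bowen.

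First I would make the coding explicit: let $\pi : \Sigma \to \Lambda$ be the coding map from a transitive subshift of finite type on a finite alphabet, and pull back $\phi_{1,n}$ to $\Sigma$, writing $f_n(x) = s_1(d_{\pi(x)} f^n)$. The almost-additivity of $S_1$ is exactly the hypothesis of the proposition, so $\F := \{\log f_n\}$ satisfies Definition \ref{aaa}. Next I would argue the Bowen property: since $f$ is $C^1$ and $\Lambda$ is a compact expanding repeller coded by a subshift of finite type, the standard bounded-distortion estimates for singular value functions (as in Falconer \cite{f} and Barreira--Gelfert \cite{bg}) give a uniform constant $M$ with $f_n(x)/f_n(y) \le M$ whenever $x_i = y_i$ for $1 \le i \le n$; hence $\F$ is a Bowen sequence in the sense of Definition \ref{bowen}. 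With the alphabet finite, $\sum_{i \in \N}\sup f_1|_{C_i}$ is a finite sum of finite terms, so condition (\ref{a1}) of Definition \ref{property} holds, and (\ref{a2}) holds for the same reason (a finite sum of finite real numbers is finite, in particular $> -\infty$). Therefore $\F \in \mathcal{R}$, or alternatively we are exactly in the situation of Corollary \ref{coro:compact}.

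Now Theorem \ref{main2} gives, for each $t \ge 1$, a unique invariant Gibbs state $\mu_t = \mu_{t\F}$ for $t\F$, which (the entropy being finite, as $\Lambda$ is a subshift of finite type over a finite alphabet) is also the unique equilibrium state. Compactness of $\M$ yields a weak-star accumulation point $\mu$, and Corollary \ref{coro:compact} (equivalently Theorem \ref{main4}) gives
\begin{equation*}
\lim_{n \to \infty} \frac{1}{n}\int \log f_n \, d\mu = \lim_{t \to \infty}\lim_{n\to\infty}\frac{1}{n}\int \log f_n \, d\mu_t = \alpha(\F) = \sup\left\{ \lim_{n\to\infty}\frac{1}{n}\int \log f_n \, d\nu : \nu \in \M \right\}.
\end{equation*}
Translating back through $\log f_n = \phi_{1,n}$ (and recalling that for ergodic $\nu$, by Kingman's theorem the limit $\frac1n \phi_{1,n}$ equals $\lim \frac1n \int \phi_{1,n}\,d\nu$ $\nu$-a.e., and the supremum over all invariant measures is attained on ergodic ones), this is precisely
\begin{equation*}
\sup\left\{ \lim_{n\to\infty}\frac{1}{n}\phi_{1,n}(x) \right\} = \lim_{n\to\infty}\frac{1}{n}\int \phi_{1,n}(x)\, d\mu,
\end{equation*}
which is the claim.

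The only genuine obstacle is the Bowen (bounded distortion) property for the singular value function $s_1(d_xf^n)$: unlike in the conformal case, $s_1$ is not multiplicative along orbits, and controlling $s_1(d_xf^n)/s_1(d_yf^n)$ requires the almost-additivity hypothesis together with the $C^1$ (in fact one typically uses $C^{1+\epsilon}$ or a cone condition, cf. Remark \ref{rem:pablo} and \cite{bg,f}) structure of the repeller; I expect this to be the step one must cite carefully rather than the passage to the limit, which is handled wholesale by Theorem \ref{main4}. Everything else — BIP, finiteness of pressure, tightness — is trivial in the compact finite-alphabet setting.
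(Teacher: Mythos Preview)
Your approach matches the paper's: the proposition is stated there as ``a direct consequence of the theorems obtained in Section~\ref{zero}'' with no further argument, and your proof is exactly the intended unpacking --- code $(\Lambda,f)$ by a finite-alphabet mixing subshift, observe that finiteness of the alphabet makes conditions (\ref{a1}) and (\ref{a2}) of Definition~\ref{property} automatic, and invoke Corollary~\ref{coro:compact} (equivalently Theorem~\ref{main4}). Your identification of the Bowen property for $s_1(d_xf^n)$ as the one hypothesis requiring outside input is also correct; the paper does not verify it here either, and in practice it is obtained together with almost-additivity under the cone/positive-matrix or dominated-splitting hypotheses of the lemma that follows (cf.\ \cite{bg,f} and Remark~\ref{rem:pablo}).
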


In particular, we obtain invariant measure supported on the set of points for which the Lyapunov exponent is maximised.

Conditions on the dynamical system $f$ so that the sequences  $S_1$ and $S_2$ are almost-additive can be found, for example, in \cite[Proposition 4]{bg} where it is proved that

\begin{lema}
Let $(\Lambda, f)$ be an expanding repeller. If
\begin{enumerate}
\item for every $x \in \Lambda$ the derivative $d_xf$is represented
by a positive $2 \times 2$ matrix,  or
\item  if $\Lambda$ posses a dominated splitting (see \cite[p.234]{b3} or \cite{bdv} for a precise definition).
\end{enumerate}
Then the sequences $S_1$ and  $S_2$ are almost-additive.
\end{lema}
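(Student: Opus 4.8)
The plan is to prove almost-additivity for $S_1$, then deduce it for $S_2$, separating the two hypotheses only at the step concerning $S_1$. For a $2\times 2$ matrix $A$ recall that $s_1(A)=\|A\|$ (operator norm) and $s_1(A)s_2(A)=|\det A|$. Writing $f_{i,n}(x)=s_i(d_xf^n)$, so that $\phi_{i,n}=\log f_{i,n}$, the chain rule $d_xf^{n+m}=d_{f^nx}f^m\circ d_xf^n$ gives $\|d_xf^{n+m}\|\le\|d_{f^nx}f^m\|\,\|d_xf^n\|$, which is condition (A2) for $S_1$ with constant $0$; hence $S_1$ is automatically sub-additive. The same chain rule gives $|\det d_xf^{n+m}|=|\det d_{f^nx}f^m|\,|\det d_xf^n|$, so $\{\log|\det d_xf^n|\}_n$ is the sequence of Birkhoff sums of $x\mapsto\log|\det d_xf|$, a continuous function that is bounded on the compact set $\Lambda$ because $f$ is expanding, so this sequence is additive. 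Therefore, once $S_1$ is shown to be almost-additive with a constant $C$, the identity $\phi_{2,n}=\log|\det d_xf^n|-\phi_{1,n}$ exhibits $S_2$ as the difference of an additive and an almost-additive sequence, and a one-line estimate shows that such a difference is again almost-additive with constant $C$. Everything thus reduces to the single inequality
\[ \|d_xf^n\|\,\|d_{f^nx}f^m\|\ \le\ e^{C}\,\|d_xf^{n+m}\|\qquad(x\in\Lambda,\ n,m\in\N), \]
with $C$ independent of $x,n,m$; this is precisely condition (A1) for $S_1$.

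Under hypothesis (1), I would use the invariant cone $\mathcal C=\{(u,v):u\ge 0,\ v\ge 0\}$. Since each $d_xf$ has strictly positive entries and $\Lambda$ is compact, $\kappa:=\inf_{x\in\Lambda}(\min_{i,j}(d_xf)_{i,j}/\max_{i,j}(d_xf)_{i,j})>0$, products of the $d_xf$ remain positive, and each such product maps $\mathcal C$ into itself. For every matrix $B$ in dimension two, $\max_j\|Be_j\|\le\|B\|\le\sqrt2\,\max_j\|Be_j\|$ with $e_1,e_2$ the standard basis vectors (which lie in $\mathcal C$); and for a positive matrix $A$ whose entry ratio is at least $\kappa$, one checks $\|Av\|\ge\kappa'\,\|A\|\,\|v\|$ for all $v\in\mathcal C$, with $\kappa'$ depending only on $\kappa$. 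Applying the latter with $A=d_{f^nx}f^m$ and $v=(d_xf^n)e_j$ gives, for $j=1,2$, $\|d_xf^{n+m}\|\ge\|(d_xf^{n+m})e_j\|\ge\kappa'\,\|d_{f^nx}f^m\|\,\|(d_xf^n)e_j\|$; maximizing over $j$ yields $\|d_xf^{n+m}\|\ge(\kappa'/\sqrt2)\,\|d_{f^nx}f^m\|\,\|d_xf^n\|$, which is (A1) with $e^{-C}=\kappa'/\sqrt2$. This is the cone-condition mechanism of Remark~\ref{rem:pablo}, in its simplest form.

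Under hypothesis (2), I would exploit the dominated splitting $T_\Lambda\R^2=E\oplus F$, with $E$ the faster line field. Two standard consequences of domination are that the angle $\angle(E_x,F_x)$ is bounded below uniformly in $x$, and that $\|d_xf^n|_{F_x}\|\le C_0\lambda^n\,\|d_xf^n|_{E_x}\|$ for uniform $C_0>0$, $\lambda\in(0,1)$. Decomposing a unit vector along $E_x$ and $F_x$ with uniformly bounded coefficients, these two facts give $\|d_xf^n\|\asymp\|d_xf^n|_{E_x}\|$ with uniform constants. But $E$ is $df$-invariant and one-dimensional, so $\|d_xf^{n+m}|_{E_x}\|=\|d_{f^nx}f^m|_{E_{f^nx}}\|\,\|d_xf^n|_{E_x}\|$ exactly, i.e.\ $\{\log\|d_xf^n|_{E_x}\|\}_n$ is the additive sequence of Birkhoff sums of the continuous function $x\mapsto\log\|d_xf|_{E_x}\|$. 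Hence $\phi_{1,n}=\log\|d_xf^n|_{E_x}\|+O(1)$ uniformly, which yields both (A1) and (A2) for $S_1$.

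I expect the real obstacle to be obtaining \emph{uniform} constants throughout: in case (1) this rests on compactness of $\Lambda$ and strict positivity forcing the entries of $d_xf$ to be uniformly bounded away from $0$ and $\infty$; in case (2) it rests on the uniform lower bound for the angle between the invariant subbundles, a classical but nontrivial property of dominated splittings, together with the uniform domination rate. Granted these, the remaining manipulations are routine; this is the argument of \cite[Proposition~4]{bg}.
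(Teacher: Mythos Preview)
The paper does not actually prove this lemma; it quotes it from \cite[Proposition~4]{bg} without argument. Your sketch is correct and is essentially the argument in that reference: reduce $S_2$ to $S_1$ via the multiplicativity of $\det$, then obtain the super-multiplicative estimate for $\|d_xf^n\|$ either from an invariant positive cone (case~(1)) or from the comparison $\|d_xf^n\|\asymp\|d_xf^n|_{E_x}\|$ furnished by domination (case~(2)).

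One small point deserves a sentence of justification. In case~(1) you invoke the inequality $\|Av\|\ge\kappa'\|A\|\,\|v\|$ for $v\in\mathcal C$ and ``a positive matrix $A$ whose entry ratio is at least $\kappa$'', and then apply it with $A=d_{f^nx}f^m$. That product is a long product of the $d_yf$'s, and its entry ratio is not literally $\ge\kappa$; what is true (and what you need) is that any product $P_k\cdots P_1$ of positive $2\times2$ matrices whose \emph{outermost} factors have entry ratio $\ge\kappa$ has entry ratio $\ge\kappa^2$, since the leftmost factor controls ratios within each column and the rightmost factor controls ratios within each row. With this remark your constant $\kappa'$ is uniform in $n,m$, and the rest of your argument goes through unchanged. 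In case~(2), the uniform angle bound and the uniform domination rate you invoke are exactly the standard consequences of a dominated splitting on a compact invariant set, so the uniformity of the $O(1)$ term is justified.
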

Actually, a cone condition of the type discussed in Corollary \ref{con} is enough to obtain almost-additivity. This is discussed also in \cite{bg}. Again, as in Remark \ref{rem:pablo}, it was shown by Feng and Shmerkin \cite{fs} that the singular value function can be approximated by almost-additive ones.

We have considered maps defined in $\R^2$, similar results can be obtained in any finite dimension. Let us stress that we have only used a compact version of  the results of Section \ref{zero}, namely  Corollary \ref{coro:compact}, which  hold true in the countable (non-compact) setting.

\end{document}